\numberwithin{figure}{section}
\newtheorem{thm}{Theorem}[section]
\newtheorem{lem}[thm]{Lemma}
\newtheorem{conj}[thm]{Conjecture}
\theoremstyle{definition}
\theoremstyle{remark}
\newtheorem{rem}[thm]{Remark}
\newcommand{\sF}{{\mathcal F}}
\newcommand{\sG}{{\mathcal G}}
\newcommand{\ZZ}{{\mathbb Z}}
\newcommand{\f}{{f}}
\newcommand{\F}{\overline{F}}
\newcommand{\G}{\overline{G}}
\newcommand{\Kn}{{K_{n}}}
\newcommand{\Ln}{{L_{n}}}
\newcommand{\ord}{{\rm ord}}
\newcommand{\dgt}{{d}}
\newcommand{\Sum}{{S}}
\newcommand{\R}{\overline{R}}
\newcommand{\Gp}{{G}}
\newcommand{\W}{{\nu}}
\numberwithin{equation}{section}
\title{Products of   Farey Fractions}
\author{Jeffrey C. Lagarias}
\address{Department of Mathematics, University of Michigan,
Ann Arbor, MI 48109-1043,USA}
\email{lagarias@umich.edu}
\author{Harsh Mehta}
\address{Department of Mathematics, University of South Carolina,
Columbia, SC 29208}
\email{hmehta@math.sc.edu}
\subjclass[2010]{Primary: 11K55, Secondary:  11S82}
\keywords{Farey sequence, unreduced Farey sequence}
\date{May 7, 2017, corrected}
\thanks{Work of the first author was supported by NSF Grants DMS-1101373 and DMS-1401224.}
\begin{document}

\begin{abstract}
The { Farey fractions} $\sF_n$ of order $n$ consist of all fractions $\frac{h}{k}$i
 lying  in the closed unit interval
and having  denominator at most $n$.
in the unit interval with denominator at most $n$, not necessarily in lowest terms.
 This paper considers  the  products $F_n$ of all nonzero  Farey fractions
of order $n$. It  studies their  growth
 and their divisibility properties  by powers of a fixed prime, given by $\ord_p(F_n)$, as a function of $n$.
It presents evidence suggesting  that information related to the Riemann hypothesis
may be encoded in  functions related to $\ord_p(F_n)$ for a single fixed prime $p$.
This  encoding makes use of a relation of these
products to the products $G_n$ of all reduced and unreduced Farey fractions of order $n$,
which are connected by M\"{o}bius inversion. It introduces new arithmetic functions
which mix the M\"{o}bius function with functions of radix expansions to a fixed prime base $p$.

\end{abstract}

\maketitle


%
%
%
\section{Introduction}

The {\em Farey sequence} $\sF_n$ of order $n$  is the sequence of  reduced fractions $\frac{h}{k}$ between $0$ and $1$ 
(including $0$ and $1$) which, when in lowest terms, have denominators less than or equal to $n$, arranged in order of increasing size.
 We write it  as 
 $$
\sF_n := \{ \frac{h}{k}: 0 \le h \le k \le n: \, gcd(h,k) = 1.\}
$$
Farey sequences are important in studying  Diophantine approximation properties of real numbers, cf. Hardy and Wright \cite[Chap. III]{HW79}. 
They can be viewed as  additive objects that  encode deep arithmetic properties of both integers and the rational numbers.

The set of Farey fractions $\sF_n$ is known to  approximate the  uniform distribution on the
unit interval $[0,1]$ as $n \to \infty$, viewing it as defining a  measure given by a sum of
(normalized) delta functions at the points of $\sF_n$.
The rate at which these  measures   approach the uniform
distribution  can be related to the Riemann hypothesis.
A precise version is given in  a celebrated  theorem of Franel  \cite{Fra24},
with extensions made in many later works, including Landau \cite{Landau27}, 
Mikol\'{a}s \cite{Mik49}, \cite{Mik51},  Huxley \cite[Chap. 9]{Hux72}, 
and Kanemitsu and Yoshimoto \cite{KY96}, \cite{KY00}.

\subsection{Farey products}

We consider a multiplicative statistic associated 
to the Farey fractions--the  products of the nonzero elements of the Farey sequence,
termed { \em Farey products}.
 To study Farey products  we use the {\em positive Farey sequence}
$$
\sF_n^{\ast} :=\sF_n \smallsetminus \{0\} =  \{ \frac{h}{k}: 1 \le h \le k \le n: \, gcd(h,k) = 1\}.
$$
For example, we have
$$
\sF_4^{\ast}:= \left\{ \frac{1}{4}, \frac{1}{3}, \frac{1}{2},  \frac{2}{3}, \frac{3}{4}, \frac{1}{1} \right\}.
$$
We let $\Phi(n) = |\sF_n^{\ast}|$ denote the number of elements of $\sF_n^{\ast}$, and we  clearly have:
\begin{equation}\label{eq-phi}
\Phi(n) = \sum_{k=1}^n \varphi(k),
\end{equation}
where $\varphi(k)$ denotes the Euler totient function, which has
$$
\varphi(k) = | (\ZZ/k\ZZ)^{\times}| = | \{ a:  1 \le a \le k \, \mbox{with} \, \gcd(a, k) =1\} |.
$$
To describe the ordered Farey fractions we introduce the notation $\rho_{r} = \rho_{r, n}$ for the
$r$-th fraction in the ordered sequence $\rho_{r, n}< \rho_{r+1, n}$, writing
$$
\sF_n^{\ast} = \{ \rho_{r} = \rho_{r, n}: 1\le r \le \Phi(n)\}.
$$
The product of the Farey fractions is then 
\begin{equation}\label{FPP}
F_n :=\prod_{r=1}^{\Phi(n)}\rho_{r,n} = \frac{N_n}{D_n},
\end{equation}
in which $N_n$   denotes the product of the numerators of all the $\rho_{r, n}$ and  $D_n$ the 
product of their denominators;  here $N_n/D_n$ is  not in lowest terms for $n >2$, cf. Section \ref{sec45}.
The {\em Farey product}  $F_n$  is a rational number in the unit interval that rapidly gets small as $n$ increases.

It proves  convenient
 to  introduce
the {\em reciprocal Farey products}
\begin{equation}\label{recip-prod}
\F_n := \frac{1}{F_n} = \frac{D_n}{N_n} = \Big( \prod_{r=1}^{\Phi(n)} \rho_r \Big)^{-1},
\end{equation}
to facilitate comparison with other results (\cite{LM14u}) ; the values of $\F_n$ rapidly increase with $n$.
Clearly $\F_n \ge 1$ and we find that $\F_1 = 1, \F_2= 2, \F_3 =9, \F_4 = 48, \F_5 =1250, \F_6=9000$.
In these examples  $\gcd(N_n, D_n)$ becomes  large,  such  that   $\F_n$ is  an integer for small $n$.
However $\F_7 = \frac{3 \cdot 5^2 \cdot 7^6}{2}$ is not an integer, and it is known that only finitely many $\F_n$
are integers, see Section \ref{sec44}.

Reciprocal Farey products  
have the following  interesting features.
\begin{enumerate}
\item
The statistic $\F_n$ extracts a single rational number from the whole collection of Farey fractions $\sF_n^{\ast}$.
The growth behavior of the numbers $\F_n$ encodes the Riemann hypothesis,
as a consequence of  a 1951 result of Mikol\'{a}s \cite{Mik51}  presented in Section \ref{sec31}.
 This encoding concerns the size of an error term in an approximation
 of $\log \F_n$ in which the main term  is an arithmetic function related to both
 the Euler totient function and the von Mangoldt function. 
\item
The  functions $\W_p(\F_n)=\ord_p(\F_n)$ that describe divisibility of $\F_n$
by a (positive or negative) power of a fixed prime $p$,
have an interesting structure. 
Here  $\ord_p(\F_n)$ gives  the exact (positive or negative or zero) power of $p$ dividing $\F_n$,
so that $p^{- \ord_p (\F_n)} \F_n$ is a rational number having both numerator and denominator prime to $p$,
 and $||\F_n||_p = p^{- \ord_p(\F_n)}$ is the usual $p$-adic valuation of $\F_n$.
There is generally a large cancellation of powers of $p$ in the numerator
and denominator of the product defining $\F_n$, and the behavior of this cancellation is of interest.
\end{enumerate} 
Since reciprocal Farey products   encode the Riemann hypothesis we
may expect in advance that they will exhibit   complicated and mysterious arithmetic behavior.
Even  simple-looking questions may prove to be quite difficult.

\subsection{Results}\label{sec12}
We study the size of the rational numbers $\F_n$ at the real place measured
using a logarithmic scale by
\begin{equation}\label{F-infty}
\W_{\infty}(\F_n) := \log(\F_n).
\end{equation}
For each prime $p$, we study the functions
\begin{equation} \label{F-prime}
\W_p(\F_n) :=\ord_p(\F_n)
\end{equation}
 which measure 
the $p$-divisibility of $\F_n$; the values $\ord_p(\F_n)$ may be positive or negative.

The investigations of this paper first
obtain information  on   Farey products $F_n$ as they relate to  
 the products  of all
reduced and unreduced Farey fractions $G_n$, which we term
{\em unreduced Farey products}. 
The  {\em reciprocal unreduced Farey products} $\G_n = 1/G_n$ are always integers,  
equal  to the product of all binomial coefficients in the $n$-row
of Pascal's triangle.   

In Section \ref{sec2} we study the  reciprocal unreduced Farey products $\G_n$,
first summarizing some  results taken from our paper \cite{LM14u}.
The  function $\log(\G_n)$ has a smooth growth given by an asymptotic expansion valid
to all orders of $\frac{1}{n^k}$. The  functions $\ord_p(\G_n)$ have a complicated but analyzable behavior related
 to the base $p$ radix expansions of the integers from $1$ to $n$. One also has
 $0 \le \ord_p(\G_n) < n \log_p n$. Then in  Section \ref{sec22} we give  basic relations between $\F_n$
 and $\G_n$ which involve the floor function. These  start with the product relation 
 $$\G_n =\prod_{\ell=1}^n \F_{[n/ \ell]},$$
and by  M\"{o}bius inversion we obtain  the basic identity
$$\F_n= \prod_{\ell=1}^n (\G_{[n/\ell]})^{\mu(\ell)}.$$
We obtain further formulas by splitting the sums using a parameter $L$ related to 
the Dirichlet hyperbola method, as formulated in Diamond \cite[Lemma 3.1]{Dia82}.
\medskip

In Section \ref{sec3} we turn to $\F_n$ and study  the growth rate of $\log(\F_n)$.
This function does not have a complete asymptotic expansion in terms of simple functions.
We review known results of Mikol\'{a}s which
relate fluctuations of this growth rate to the Riemann hypothesis. They say  that $\log(\F_n)$ is well approximated
by a main term  $\Phi(n) - \frac{1}{2} \psi (n)$, in which $\Phi(n)$ is as defined in \eqref{eq-phi},
$\psi(n) = \sum_{k \le n} \Lambda(k)$,
with $\Lambda(n)$ being the von Mangoldt function. The size of the remainder term
 $R_{\F}(n) =\log(\F_n) -\big(\Phi(n) - \frac{1}{2}\psi(n)\big)$ is
then  related to the Riemann hypothesis. In Section \ref{sec3}  we also review known results about the 
fluctuating behavior of $\Phi(n)$.

In Section \ref{sec4a} we study the functions $\ord_p(\F_n)$. 
These functions  have a more complicated behavior than  of $\ord_p(\G_n)$.  We give  formulas
for computing $\ord_p(\F_n)$, and present  experimental data 
on  its values for small primes $p$.
We do not understand the behavior of $\ord_p(\F_n)$ well theoretically, and
our data leads us to formulate a set of four hypotheses stating (unproved) properties
(P1) - (P4) that these functions might have. These hypothetical properties  (P1) - (P4)  include assertions that 
$\ord_p(\F_n)$ has infinitely many sign changes; that  a sign change
always occurs between $n =p^k-1$ and $n = p^k$, for $k>1$;  and that the growth rate of $\ord_p(\F_n)$
is of order $O( n \log_p n)$. 
Even very special cases of these properties are unsolved problems
which may be hard. For example: Is it true that 
for a prime $p$  the inequality 
$\ord_p(\F_{p^2 -1}) \le 0$ always holds?   
This assertion comprises a  family of one-sided inequalities involving M\"{o}bius function sums.
A family of one-sided inequalities of this sort, if true, would be of great
interest as providing fundamental new arithmetic information about the M\"{o}bius function.
At the end of Section \ref{sec4a} we present a few  theoretical results supporting
the possible validity of these properties.  We show that for each $p$ there is at least one sign change 
in the value of $\ord_p(\F_n)$.
Concerning the size of $\ord_p(\F_n)$ we have  the easy bound $|\ord_p(\F_n)|\le n (\log_p n)^2$
which follows from knowledge of $\ord_p(\G_n)$.

 In Section \ref{sec5} we study relations between the growth rate of $\log(\F_n)$ and the Riemann hypothesis,
 given by the result of Mikol\'{a}s, with
  the  main term in Mikol\'{a}s formula being $ \Phi(n) - \frac{1}{2} \psi(n)$.
 In this section we relate this main term to
 a quantity  given entirely in  terms of  $\log (\G_n)$ and the M\"{o}bius function, using a parallel with 
  the ``hyperbola method" of Dirichlet.  In Section \ref{sec52} we justify our definition of ``replacement main term" $\Phi_{\infty}(n)$
by showing that the Riemann hypothesis implies that it is indeed close to the ``main term" $\Phi(n) - \frac{1}{2}\psi (n)$
in the Mikol\'{a}s formulation of the Riemann hypothesis. 
We obtain a formula for the ``replacement remainder term" and present empirical evidence about its behavior.
It has a very striking non-random features in which the influence of the M\"{o}bius function is clearly visible.

In  Section \ref{sec6}, we  ask: 
{\em Can one  approach the Riemann
 hypothesis through knowledge  of the function $\ord_p(\F_n)$ at a single fixed prime $p$?}
  Note that the product formula for rational numbers expresses  $\log(\F_n)$  as
 a  weighted sum of $\ord_p(\F_n)$ for $p \le n$, and by the Mikol\'{a}s result
 this in principle  allows the  Riemann hypothesis 
 to be expressed as a complicated function of  {\em all}  the functions $\ord_p(\F_n)$
 with variable $p$. Speculation  that the Riemann hypothesis  might be visible from data at a {\em single} prime $p$  seems initially  
 unbelievable.
 It becomes less far-fetched when one observes 
 from the formulas  that the full set of M\"{o}bius function values $\{ \mu(m) : n\geq m \geq 1\}$  influence 
 the values $\ord_p(\F_n)$. 
 
 Section \ref{sec6} parallels the recipe of Section \ref{sec5} 
 in  formulating at the prime $p$  formulas analogous to the ``replacement main term", given
now in terms of $\ord_p(\G_n)$, which might serve as a  ``main term" to approximate the function
$\ord_p(\F_n)$.
The new ``replacement main terms" and ``remainder terms"
 are based on the M\"{o}bius inversion relation between $\ord_p(\F_n)$ and  the $\ord_p(\G_n)$,
and the resulting division into two terms  is related to the Dirichlet hyperbola method.
For a fixed $p$ there
 are now three different possible recipes 
to split off a ``main term" and ``remainder term", unlike the archimedean case
considered in Section \ref{sec5}.
The resulting terms  include new kinds of arithmetic sums  not studied before: 
{\em  individual terms   in these sums involve  M\"{o}bius function values multiplied by 
sums of the base $p$ digits at selected integer values.} These new ``replacement main
terms" themselves have unusual structure, being oscillatory functions. However, 
after they are removed, 
one can   ask the question whether the size of the ``remainder terms" in these new expressions is
related to zeta zeros, and in particular to the Riemann hypothesis. 

 We try  all three  for a ``replacement main term",
and find  experimentally that one of them gives plots of
the remainder term having  non-random features in striking
parallel with the experimental data in the archimedean case in Section \ref{sec5}. 
This observation was a  remarkable experimental discovery of this work.

In the final Section \ref{sec7} we make concluding remarks
on this possible encoding of the Riemann hypothesis at  a fixed
finite prime.


In Appendix A (\ref{secAA}
 we present additional computational results for $p=3$
complementing results for $p=2$ given in Section \ref{sec43}.

%
%
%

\section{Unreduced Farey Products }\label{sec2}

Unreduced Farey products  provide  an approach to 
 understand the   Farey products. 
  The  {\em unreduced Farey sequence} $\sG_n$ is 
the   ordered sequence of all reduced and unreduced fractions 
between $0$ and $1$ with denominator of size at most $n$, and its positive analogue,
which we denote
$$
\sG_n^{\ast}  := \left\{ \frac{h}{k}: 1 \le h \le k \le n\right\}.
$$
We order these unreduced fractions in increasing order, breaking ties between equal fractions 
by placing them in  order of increasing denominator. For example, we have
$$
\sG_4^{\ast}:= \left\{ \frac{1}{4}, \frac{1}{3}, \frac{1}{2}, \frac{2}{4}, \frac{2}{3}, \frac{3}{4}, \frac{1}{1}, \frac{2}{2}, \frac{3}{3}, \frac{4}{4} \right\}.
$$
Denoting the  number of elements in $\sG_n^{\ast}$ as $\Phi^{\ast}(n)$, we may 
may label the fractions in $\sG_n^{\ast}$ in order as $\rho_r^{*} = \rho_{r,n}^{\ast}$ and write
$$
\sG_n^{\ast} = \{ \rho_{r}^{*}= \rho_{r, n}^{*}: 1\le r \le \Phi^{\ast}(n)\}.
$$
Here we have 
\begin{equation} \label{unred-tot}
\Phi^{\ast}(n) = \sum_{k=1}^n k = {{n+1}\choose {2}}.
\end{equation}
Now we define the {\em unreduced Farey product} 
$$
G_n :=\prod_{r=1}^{\Phi^{\ast}(n)} \rho_{r,n}^{\ast}= \frac{N_n^{\ast}}{D_n^{\ast}},
$$
where  $N_n^{\ast}$  denotes the product of the numerators of all $\rho_{r,n}^{\ast}$ and $D_n^{\ast}$
the corresponding product of  denominators; certainly   $N_n^{\ast}/D_n^{\ast}$ is not in lowest terms.
Now we define the {\em reciprocal unreduced  Farey product} 
$$
\G_n:= 
\frac{1}{G_n} =\frac{D_n^{\ast}}{N_n^{\ast}}= \Big( \prod_{r=1}^{\Phi_n^{\ast}} \rho_r \Big)^{-1}.
$$ 
Here $\G_1=1, \G_2=2, \G_3=9, \G_4=96, \G_5=2500, \G_6=162000$ and $\G_7= 3^2 \cdot 5^2 \cdot 7^6$
is an integer.

%
%
%

\subsection{Properties of reciprocal unreduced Farey products $\G_n$}\label{sec21}

This section recalls results from 
a  detailed study of reciprocal unreduced Farey products $\G_n$
made in \cite{LM14u}.
from which we recall the following results.  A first result is that
$\G_n$ is always an integer, being given as a product of binomial
coefficients
\begin{equation}\label{2000}
\G_n= \prod_{j=0}^n {{n}\choose{j}}.
\end{equation}
For this reason the $\G_n$ are called {\em binomial products} in \cite[Theorem 2.1]{LM14u}.
The numerators and denominators in  this  formula 
have asymptotic expansions which when combined yield a  good asymptotic expansion for $\log (\G_n)$, 
valid when  $n$ is a positive  integer (\cite[Theorem 3.1, Appendix A]{LM14u}).

\begin{thm}\label{th20}
For  positive integers $n \to \infty$ there holds 
\begin{equation}\label{asymp}
\log  (\G_n)=\frac{1}{2}n^2-\frac{1}{2}n\log n+(1-\log(\sqrt{2\pi}))\, n -\frac{1}{3}\log n+g_0+ O(\frac{1}{n}).
\end{equation}
In this formula  $g_0=  -\frac{1}{2} \log (2 \pi)  - \frac{1}{12}+ 2 \log A$ with $A$
denoting the Glaisher-Kinkelin constant
$A=\exp \big( \frac{1}{12}- \zeta^{'}(-1)\big) \approx 1.282427$.
\end{thm}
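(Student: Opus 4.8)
The plan is to reduce $\log(\G_n)$ to a combination of two classical asymptotic expansions. I would start from the binomial product formula $\G_n = \prod_{j=0}^n \binom{n}{j}$ recalled above (from \cite{LM14u}), take logarithms, and write $\binom{n}{j} = n!/(j!\,(n-j)!)$ to get
\begin{equation*}
\log(\G_n) = (n+1)\log(n!) - 2\sum_{j=0}^n \log(j!).
\end{equation*}
Next I would collapse the double sum using $\log(j!) = \sum_{i=1}^j \log i$, so that $\sum_{j=0}^n \log(j!) = \sum_{i=1}^n (n+1-i)\log i = (n+1)\log(n!) - \sum_{i=1}^n i\log i$, and therefore
\begin{equation*}
\log(\G_n) = -(n+1)\log(n!) + 2\sum_{i=1}^n i\log i = -(n+1)\log(n!) + 2\log\!\Big(\prod_{i=1}^n i^i\Big).
\end{equation*}
This exhibits $\log(\G_n)$ in terms of the factorial $n!$ and the hyperfactorial $\prod_{i=1}^n i^i$, for both of which precise asymptotics are available.

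For the first term I would insert Stirling's formula carried to one more order than the nominal accuracy, namely $\log(n!) = n\log n - n + \tfrac12\log n + \tfrac12\log(2\pi) + \tfrac{1}{12n} + O(n^{-3})$; multiplying by $-(n+1)$ then produces an expansion with error $O(1/n)$, and in particular the $\tfrac{1}{12n}$ term contributes the $-\tfrac{1}{12}$ that appears in $g_0$, while the $-\tfrac12 n\log(2\pi)$ part will later combine with the linear term. For the second term I would apply Euler--Maclaurin summation to $f(x) = x\log x$ on $[1,n]$: the integral $\int_1^n x\log x\,dx = \tfrac{n^2}{2}\log n - \tfrac{n^2}{4} + \tfrac14$ supplies the dominant terms, the endpoint and derivative corrections supply $\tfrac12 n\log n + \tfrac{1}{12}\log n$ together with a convergent constant, and the tail terms decay, so that $\log\!\big(\prod_{i=1}^n i^i\big) = \big(\tfrac{n^2}{2} + \tfrac{n}{2} + \tfrac{1}{12}\big)\log n - \tfrac{n^2}{4} + \log A + O(1/n)$, where by definition of the Glaisher--Kinkelin constant the constant term is $\log A$ and the classical identity $\log A = \tfrac{1}{12} - \zeta'(-1)$ is used. (Equivalently this expansion can be read off from the known asymptotics of the Barnes $G$-function at integer arguments, since $\prod_{j=1}^{n-1} j! = G(n+1)$.)

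Finally I would substitute both expansions into the identity for $\log(\G_n)$ and collect by order in $n$. The essential bookkeeping point is that the $n^2\log n$ and $n\log n$ contributions of $-(n+1)\log(n!)$ exactly cancel those of $2\log(\prod_{i=1}^n i^i)$; what survives is $\tfrac12 n^2$ (from $n^2 - \tfrac12 n^2$), the term $-\tfrac12 n\log n$, the linear term $n\bigl(1 - \tfrac12\log(2\pi)\bigr) = (1 - \log\sqrt{2\pi})\,n$, the logarithmic term $\bigl(-\tfrac12 + \tfrac16\bigr)\log n = -\tfrac13\log n$, and the constant $-\tfrac12\log(2\pi) - \tfrac{1}{12} + 2\log A = g_0$, with remainder $O(1/n)$, which is exactly \eqref{asymp}. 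The one genuinely nontrivial ingredient is the evaluation of the constant in the hyperfactorial expansion and its identification with $\log A$ (this is where $\zeta'(-1)$ enters); everything else is elementary manipulation of Stirling's series. If one wants the stronger statement, mentioned in the text, that the expansion holds to all orders in $1/n^k$, one simply retains the full Euler--Maclaurin and Stirling expansions throughout, the structure of the computation being unchanged.
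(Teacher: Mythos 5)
Your derivation is correct, and it follows essentially the route the paper gestures at: the paper cites the companion reference \cite{LM14u} and describes the proof as combining asymptotic expansions of the ``numerators and denominators'' in the binomial-product formula, which amounts to exactly your reduction $\log(\G_n) = -(n+1)\log(n!) + 2\sum_{i=1}^n i\log i$ and the subsequent use of Stirling's series together with the Glaisher--Kinkelin asymptotic for the hyperfactorial. The bookkeeping in your final collection of terms checks out, including the cancellation of the $n^2\log n$ and $n\log n$ contributions and the assembly of $g_0 = -\tfrac12\log(2\pi) - \tfrac1{12} + 2\log A$.
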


One may extend   $\G_n$ to a function of a real variable $x$
as a  step function $\G_x := \G_{\lfloor x \rfloor}$.
When this is done, 
the asymptotic expansion \eqref{asymp} above  remains valid  {\em only at integer values of $x$}; the jumps in the step function
are of size $\ge n$, which is larger than all but the first three terms in the expansion \eqref{asymp}. For later
use, we restate  this in the form
\begin{equation}\label{asymp2}
\log  (\G_n)=\Phi^{\ast}(n) -\frac{1}{2}n\log n+\frac{1}{2}\log(\frac{e}{2\pi})\, n + O (\log n).
\end{equation}

Secondly  we have essentially sharp upper and lower bounds for $\ord_p(\G_n)$ 
(\cite[Theorems 6.7 and  6.8]{LM14u}).

\begin{thm}\label{th29}
For each prime $p$, there holds for all $n \ge 1$, 
\begin{equation}\label{eq216}
0 \le \ord_p(\G_n) < n \log_p n.
\end{equation}
The value at $n= p^k-1$ is  $\ord_p(G_{p^k-1})=0$. 
The value at  $n=p^k$ is
\begin{equation}\label{sharp-bd}
\ord_p(\G_{p^k})=\left(kp^k-\frac{p^k-1}{p-1}\right).
\end{equation}
This value has  $\ord_p(\G_n) \ge n \log_p n - n$. 
\end{thm}

We record next an explicit formula for $\ord_p(\G_n)$,
which is related to the base $p$ expansion of $n$.
We write a positive integer $n$ in a general radix base $b \ge 2$ as
$$
n := \sum_{i=0}^{k} a_i b^i, \, \mbox{for} \,  b^k \le n < b^{k+1}.
$$
with digits $0 \le a_i=a_i(n) \le b-1$ and $k = \lfloor \log_b n\rfloor.$

The {\em sum of digits function (to base $b$)} of $n$  is
\begin{equation}\label{sum-dig}
\dgt_b(n) := \sum_{i=0}^k a_i(n),
\end{equation}

 The {\em total digit summatory  function (to base $b$)}  is
\begin{equation}\label{tot-sum-dig}
\Sum_b(n) := \sum_{j=0}^{n-1} \dgt_b(j).
\end{equation}

Then we have (\cite[Theorem 5.1]{LM14u})

\begin{thm}\label{th39} 
Let the prime $p$ be fixed. Then for all $n \ge 1$, 
\begin{equation}\label{summatory}
\W_p(\G_n) = \ord_p (\G_n) = \frac{1}{p-1} \Big(2\Sum_p(n) - (n-1) \dgt_p(n)   \Big).
\end{equation}
\end{thm}

This identity was established  starting from an observation made in Granville \cite[equation (18)]{Gra97}.
There is  an explicit expression for $\Sum_p(n)$ due to Delange \cite{Del75}, 
which applies more generally
to radix expansions to an arbitrary integer base $b \ge 2$.


\begin{thm}\label{th313} {\em (Delange (1975))}
Given an integer base $b \ge 2$, there exists a function $\f_b(x)$ on the
real line, which is continuous and periodic of period $1$,
such that for all integers $n \ge 1$,
\begin{equation} \label{279}
 \Sum_b(n) = \left(\frac{b-1}{ 2}\right) n \log_b n + \f_b( \log_b n) n.
\end{equation}
\end{thm}

Delange showed that the  function $\f_b(x)$ has a Fourier series expansion
$$
\f_b(x) = \sum_{k \in \ZZ}  c_b(k) e^{2 \pi i k x}
$$
 whose Fourier coefficients
are given for $k \ne 0$ by
\begin{equation}\label{zetac}
c_b(k) = -\frac{ b-1}{2 k \pi i} \left( 1+ \frac{2 k \pi i}{\log b} \right)^{-1} \zeta\left( \frac{2 k \pi i}{\log b}\right).
\end{equation}
with $\zeta(s)$ being the Riemann zeta function,
and  with constant term
\begin{equation}\label{CT}
c_b(0) = \frac{b-1}{2 \log(b)}( \log(2 \pi)-1) - \left(\frac{b+1}{4}\right).
\end{equation}
The function $\f_b(x)$ is continuous but Delange \cite[Sect. 3]{Del75} showed  it
is everywhere non-differentiable, see also Tenenbaum \cite{Ten97}.

To illustrate the behavior of $\ord_p(\G_n)$ which is described by
Theorem \ref{th39} and \ref{th313}, in Figure \ref{fig21-ord2} we give a plot of $\ord_2(\G_n)$
for $1 \le n \le 1023= 2^{10}-1$. The  visible ``streaks" in the plot 
represent values where $\dgt_p(n)=j$ has a constant value.
There are  large jumps in $\ord_2(\G_n)$ between the value $n=2^k-1$
where $\ord_2(\G_n)=0$, and $n=2^k$, where $\ord_2(\G_n) \ge (k-1)n.$


\begin{figure}[!htb]
\includegraphics[width=135mm]{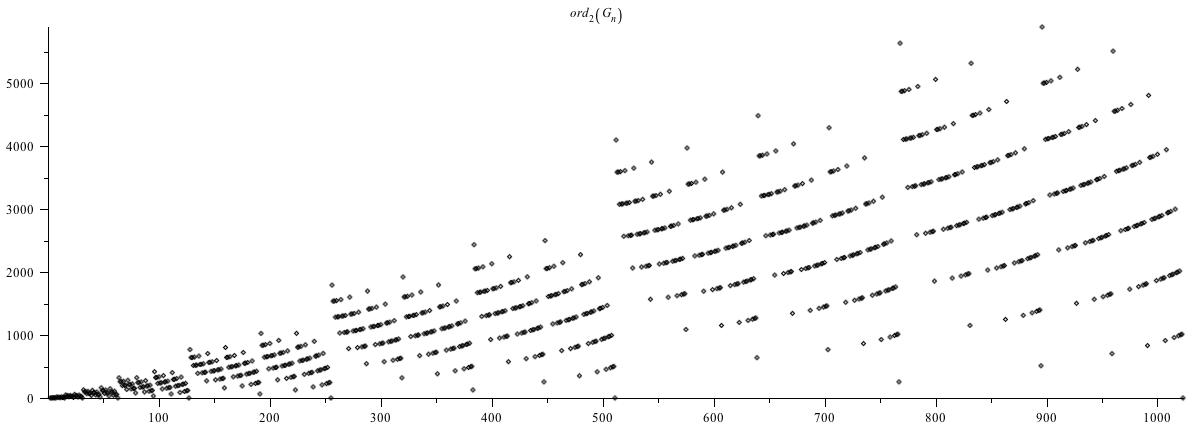}
\caption{$\ord_2(\G_n)$, $1 \le n \le 1023$.}
\label{fig21-ord2}
\end{figure}

In Figure \ref{fig32-CLT} we  plot the behavior of $\frac{1}{n} \ord_2(\G_n)$
over the range of a single power of $2$,  $2^k \le n \le 2^{k+1} -1$, 
 sorted in order of increasing size. 
 The sorted values of $\frac{1}{n} \ord_2(\G_n)$ over this range  have mean about $\frac{1}{2} k = \frac{1}{2} \log_2 n$
as $k \to \infty$, have variance proportional to $\sqrt{\log n})$, and if properly scaled satisfies a central limit
theorem as $k \to \infty$.


\begin{figure}[!htb]
\includegraphics[width=130mm]{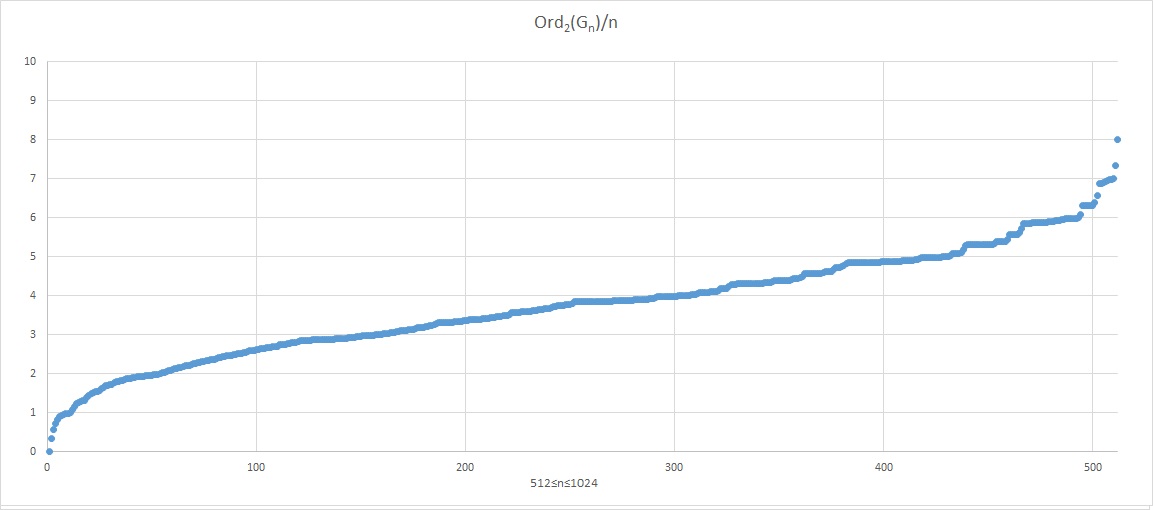}

\caption{Sorted values of $\frac{1}{n} \ord_2(\G_n)$, $512 \le n \le 1024$.}
\label{fig32-CLT}
\end{figure}

These two plots of $\ord_2(\G_n)$ are presented for later comparison with $\ord_2(\F_n)$.

%
%
%
\subsection{Relation of the $\F_n$ and $\G_n$: M\"{o}bius inversion}\label{sec22}

The reciprocal Farey products $\F_n$ are directly expressible  in terms
of  reciprocal unreduced  Farey products $\G_n$ introduced in \cite{LM14u} by M\"{o}bius inversion. 

\begin{thm}\label{th41}
The  reciprocal unreduced Farey products are related to the reciprocal Farey products by the identity
\begin{equation}\label{F-to-G}
\G_n = \prod_{\ell=1}^n \F_{\lfloor n/\ell \rfloor}.
\end{equation}
 By M\"{o}bius inversion, there holds
\begin{equation} \label{G-to-F}
\F_n = \prod_{\ell=1}^n \big(\G_{\lfloor n/\ell\rfloor}\big)^{\mu(\ell)}.
\end{equation}
\end{thm}

\begin{proof}
We group the elements $\frac{h}{k}$of $\sG_n^{\ast}$  according
to the value 
$\ell:=\gcd(h, k)$. The fractions with a fixed $\ell$ are in one-to-one correspondence
with elements of the  Farey sequence $\sF_{n/\ell}$, and their product is identical with
the product of the elements of that Farey sequence. This gives the first formula.

To obtain the second formula, we make a detour 
by taking a logarithm to obtain an additive formula, namely
\begin{equation} \label{eq41aa}
\log (\G_n)=\sum_{\ell=1}^n  \,\log ( \F_{\lfloor n/\ell} \rfloor),
\end{equation}
A  variant  of  the M\"{o}bius inversion formula (\cite[Sec. I.2, Theorem 9]{Ten95})  then yields 
\begin{equation} \label{eq41}
\log (\F_n)=\sum_{\ell=1}^n \mu(\ell) \,\log (\G_{\lfloor n/\ell \rfloor}).
\end{equation}
The second formula follows by exponentiating both sides of \eqref{eq41}.
\end{proof}

\begin{rem}
If we  define $\G_x= \G_{\lfloor x\rfloor}$ and $\F_x = \F_{\lfloor x\rfloor}$  as step functions
of a real variable $x$  then we can rewrite the formulas
above without the floor function notation, as
$$
\G_n = \prod_{\ell=1}^n \F_{ n/\ell} \quad \mbox{and} \quad  \F_n = \prod_{\ell=1}^n \big(\G_{ n/\ell}\big)^{\mu(\ell)}.
$$
However the subtleties in the behavior of these functions certainly has to do with the floor function,
and we prefer to have it visible.
\end{rem}

In the formulas of Theorem \ref{th41} 
 the fractions $\lfloor n/\ell \rfloor$ take only about
$2 \sqrt{n}$ distinct values. 
 This allows the possibility  to  combine terms in the sum and  take advantage of
cancellation in sums of the M\"{o}bius function. We recall that   the {\em Mertens function}
 $M(n)$ is defined by
\begin{equation}
M(n) := \sum_{j=1}^n \mu(j).
\end{equation}
 We split the sum \eqref{eq41} for $\log (\F_n)$ into two parts, using a parameter $L$, as
 \medskip
\begin{eqnarray*}
\log (\F_n) & = & \sum_{k=1}^{n/(L+1)} \mu(k) \log (\G_{\lfloor n/k \rfloor})  + 
\sum_{\ell=1}^L  \left( \sum_{\frac{n}{\ell +1} < k \le \frac{n}{\ell}} \mu(k) \right) \log (\G_{\ell}) \, \nonumber \\
& = & \sum_{k=1}^{n/(L+1)} \mu(k) \log (\G_{\lfloor n/k \rfloor})  + 
\sum_{\ell=1}^L  \left(M\left(\frac{n}{\ell}\right) - M\left(\frac{n}{\ell+1}\right)\right) \log (\G_{\ell}).
\end{eqnarray*}
The second term accumulates  cancellations among consecutive M\"{o}bius function
values. This sort of splitting formula is associated with the Dirichlet hyperbola method,
as formulated in Diamond \cite[Lemma 2.1]{Dia82}, cf.  Tenenbaum \cite[Sect. 3.2]{Ten95}.

 The most balanced parameter choice is $L= \lfloor \sqrt{n} \rfloor$, in which case we  write
\begin{equation}\label{archimedean}
\log (\F_n) = \Phi_{\infty}^{+}(n) + \Phi_{\infty}^{-}(n), 
\end{equation}
setting
\begin{equation}
\Phi_{\infty}^{+}(n) := \sum_{ k =1}^ {n/(\lfloor \sqrt{n} \rfloor +1)} 
\mu(k) \log (\G_{\lfloor n/k \rfloor})
\end{equation}
and
\begin{equation}
\Phi_{\infty}^{-}(n) := \sum_{\ell=1}^{\lfloor \sqrt{n}\rfloor}  \left(M\left(\frac{n}{\ell}\right) - M(n/(\ell +1))\right) \log (\G_{\ell}).
\end{equation}
Note that if  $m^2 \le n < (m+1)^2$ then  
$$
\frac{n}{(\lfloor \sqrt{n}  \rfloor +1)}=
\begin{cases}
\lfloor \sqrt{n} \rfloor -1 & \quad \mbox{if} \quad m^2 \le n < m(m+1)\\
\lfloor \sqrt{n} \rfloor  & \quad \mbox{if} \quad m(m+1) \le n < (m+1)^2.
\end{cases}
$$

It is well known that the Riemann hypothesis is equivalent to the growth estimate
$M(n) = O ( n^{\frac{1}{2} + \epsilon})$ being valid for each $\epsilon>0,$
see Titchmarsh \cite[Theorem 14.25 (C)]{TH86}.
To obtain some unconditional cancellations in the second
sum, one may take
$L$ to be much smaller, e.g. $L =\exp( (\log n)^{\theta})$ for a suitable choice of $\theta$,
and use the unconditional estimate 
$M(n) =   O (n\exp( -(\log n)^{\theta}))$ known to be valid for $\theta> \frac{3}{5}$.
To extract information from the resulting formulas seems to require additional ideas, 
which  we hope to address on another occasion.

\begin{rem}
Parallel to \eqref{eq41aa} and \eqref{eq41} for $\log \G_n$ and
$\log \F_n$ there are analogous formulas for prime divisibility of $\G_n$ and $\F_n$.
Applying $\ord_p (\cdot)$ in Theorem \ref{th41} yields 
\begin{equation} \label{eq42aa}
\ord_p (\G_n) =\sum_{\ell=1}^n   \ord_p (\F_{\lfloor n/\ell \rfloor}),
\end{equation}
and
\begin{equation}\label{eq42}
\ord_p ( \F_n) = \sum_{\ell=1}^n \mu(\ell) \,\ord_p (\G_{\lfloor n/\ell \rfloor}).
\end{equation}
We can also split these sums into two parts using a parameter $L$, as done
above.
\end{rem}

%
%
%

\section{Reciprocal Farey Product $\F_n$ Archimedean Growth Rate}\label{sec3}

The growth rate of Farey  products measured by $\log (\F_n)$ 
was studied by
Mikol\'{a}s  \cite{Mik51}, who showed their behavior encodes 
the Riemann hypothesis.
We describe this result and other known results about its oscillatory main term.

%
%
%

\subsection{Mikol\'{a}s's theorem}\label{sec31}

In 1951 Mikol\'{a}s obtained  an asymptotic formula for  the growth rate of $\log(\F_n)$
having an error term related to the  Riemann hypothesis.
To formulate  his
results, we  first recall that, for $Re(s) >1$, there holds
\begin{equation}\label{eq61}
\frac{\zeta'(s)}{\zeta(s)} = - \sum_{n=1}^{\infty} \Lambda(n) n^{-s},
\end{equation}
where the {\em von Mangoldt function} $\Lambda(n)$ has 
$$
\Lambda(n) = 
 \left\{  \begin{array} {ll}  
   \log \, p & \mbox{if} \, \,\, n = p^k, \\
    0 &  \mbox{if} \, \,\, n \ne p^k.\, 
    \end{array}
    \right.
$$
We define the summatory function
$$
\psi(x) := \sum_{k=1}^{\lfloor x \rfloor} \Lambda (k)
$$
The  prime number theorem with error term states that 
$$
\psi(x)=  x+ O \big( x \exp( - C (\log x)^{\theta})\big)
$$
where the current best exponent is   $\theta = \frac{3}{5} + \epsilon$.

 Mikol\'{a}s \cite[Theorem 1]{Mik51} established the following result,
 showing that 
 $\log(\F_n)$ is well approximated by $ \Phi(n) - \frac{1}{2} \psi(n)$.
 
\begin{thm} \label{th43}  {\em (Mikol\'{a}s (1951))}
Define the remainder term $R_{\F}(n)$ by the equation
\begin{equation}\label{eq65}
\log (\F_n) =  \Phi(n) - \frac{1}{2} \psi(n)  + R_{\F}(n)
\end{equation}
Then $R_{\F}(n)$ satisfies  the following  bounds.

(1) Unconditionally, there is a constant $C>0$ such that 
$$
| R_{\F}(n)| = O \big( n \exp( - C \sqrt{\log n})\big)
$$
holds for $2 \le n < \infty$.

(2) The Riemann hypothesis is true if and only if, for each $\epsilon>0$,
$$
| R_{\F}(n) |  = O \big( n^{\frac{1}{2} + \epsilon}\big)
$$
holds for $2 \le n < \infty$.
\end{thm}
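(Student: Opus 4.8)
The plan is to exhibit $\log(\F_n)$ as the summatory function of a single arithmetic function, to compute its Dirichlet series explicitly, and then to read the statement off from the classical correspondence between the error term of a summatory function and the location of the zeros of $\zeta(s)$. I begin from the M\"{o}bius inversion identity of Theorem~\ref{th41}: $\log(\F_n)=\sum_{\ell\le n}\mu(\ell)\log(\G_{\lfloor n/\ell\rfloor})$. Using the binomial-product formula for $\G_m$ recalled in Section~\ref{sec21} and the relation $\binom{m}{j}/\binom{m-1}{j}=m/(m-j)$, one gets $\log(\G_m)-\log(\G_{m-1})=\sum_{i=1}^{m}\log(m/i)=m\log m-\log(m!)=:\gamma_m$, so $\log(\G_M)=\sum_{m\le M}\gamma_m$; interchanging the two summations yields the exact identity
\begin{equation*}
\log(\F_n)=\sum_{k\le n}(\mu*\gamma)(k),
\end{equation*}
that is, $\log(\F_n)$ is the summatory function of $h:=\mu*\gamma$, with Dirichlet series $\sum_k h(k)k^{-s}=D_\gamma(s)/\zeta(s)$, where $D_\gamma(s):=\sum_m\gamma_m m^{-s}$.

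Next I would insert Stirling's formula as $\log(m!)=m\log m-m+\tfrac12\log m+\tfrac12\log(2\pi)+r(m)$ with $r(m)=\tfrac1{12m}+O(m^{-2})>0$, so that $\gamma_m=m-\tfrac12\log m-\tfrac12\log(2\pi)-r(m)$; using $\sum_m m^{1-s}=\zeta(s-1)$ and $\sum_m(\log m)m^{-s}=-\zeta'(s)$,
\begin{equation*}
D_\gamma(s)=\zeta(s-1)+\tfrac12\zeta'(s)-\tfrac12\log(2\pi)\,\zeta(s)-G(s),\qquad G(s):=\sum_m r(m)m^{-s},
\end{equation*}
where $G$ converges absolutely and is bounded on every half-plane $\Re s\ge\delta>0$. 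Dividing by $\zeta(s)$ and recalling $\sum_k\varphi(k)k^{-s}=\zeta(s-1)/\zeta(s)$ and $\sum_k\Lambda(k)k^{-s}=-\zeta'(s)/\zeta(s)$, comparison of Dirichlet coefficients shows $h(k)=\varphi(k)-\tfrac12\Lambda(k)-(\mu*r)(k)$ for $k\ge2$, with the value at $k=1$ carrying the extra constant $-\tfrac12\log(2\pi)$. Summing over $k\le n$ and recalling \eqref{eq65} then gives the exact formula
\begin{equation*}
R_{\F}(n)=-\tfrac12\log(2\pi)-E(n),\qquad E(n):=\sum_{k\le n}(\mu*r)(k),
\end{equation*}
so the whole theorem reduces to estimating $E(n)$, the summatory function of $\mu*r$, whose Dirichlet series $G(s)/\zeta(s)$ is holomorphic for $\Re s>0$ except for poles at the zeros of $\zeta(s)$.

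The bounds on $E(n)$ follow from Perron's formula $E(n)=\frac1{2\pi i}\int_{(c)}\frac{G(s)}{\zeta(s)}\frac{n^{s}}{s}\,ds$ with $c>1$. For part~(1), moving the contour left past the de la Vall\'{e}e Poussin zero-free region $\sigma>1-c_0/\log(|t|+2)$, where $1/\zeta(s)\ll\log(|t|+2)$ and $G$ is bounded, gives $E(n)\ll n\exp(-c_1\sqrt{\log n})$, the asserted unconditional estimate. For the ``if'' half of part~(2), if the Riemann hypothesis holds one moves the contour instead to $\Re s=\tfrac12+\epsilon$, where $1/\zeta(s)\ll_\epsilon|t|^{\epsilon}$, and obtains $E(n)\ll_\epsilon n^{1/2+\epsilon}$. (These two implications can also be obtained without contour integration: $\Phi(n)-\tfrac12\psi(n)$ is extracted exactly from $\sum_{\ell\le n}\mu(\ell)\log(\G_{\lfloor n/\ell\rfloor})$ by means of $\varphi=\mu*\mathrm{id}$, $\Lambda=\mu*\log$, $\sum_{\ell\le n}\mu(\ell)\lfloor n/\ell\rfloor=1$ and Theorem~\ref{th20}, while grouping the remaining sums by the value of $\lfloor n/\ell\rfloor$ and Abel-summing as in \eqref{204} reduces $E(n)$ to $\sum_j w_j M(\lfloor n/j\rfloor)$ with $w_j\ll 1/j$, which is controlled by $M(x)\ll x\exp(-c\sqrt{\log x})$ and, under the Riemann hypothesis, by $M(x)\ll_\epsilon x^{1/2+\epsilon}$.)

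For the converse half of part~(2) I would argue by contraposition using Landau's oscillation theorem: if $\zeta$ has a zero $\rho_0$ with $\beta_0:=\Re\rho_0>\tfrac12$ at which $G(\rho_0)\ne0$, then $G(s)/\zeta(s)$ has a genuine singularity on the line $\Re s=\beta_0$, so $E(n)=\Omega_{\pm}(n^{\beta_0-\epsilon})$, contradicting $R_{\F}(n)=-\tfrac12\log(2\pi)-E(n)\ll n^{1/2+\epsilon}$; hence every zero of $\zeta$ has real part $\le\tfrac12$. I expect this converse to be the main obstacle. Beyond the standard but technical vertical-strip estimates for $\zeta^{-1}$ and $G$ that the contour shifts require, the decisive point is that the argument needs $G$ to be nonvanishing at the zeros of $\zeta$ of largest real part; since $r(m)>0$ for all $m$ one checks readily that $G$ is zero-free for $\Re s\ge1$, but pushing this down to $\Re s>\tfrac12$ --- or avoiding it, for example by replacing $G$ with a suitable mollifier or by extracting the pole of $G(s)/\zeta(s)$ through an averaged form of $E$ --- is where the real work lies. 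Every remaining ingredient (the binomial-product identity, the Stirling expansion of $D_\gamma$, and the Abel-summation bookkeeping in the elementary variant) is routine.
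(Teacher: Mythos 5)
The paper does not actually prove this theorem---its ``proof'' is a citation to Mikol\'{a}s's 1951 paper. Your attempt is therefore a genuinely independent one, and the reduction at its core is clean and correct: writing $\gamma_m := \log\G_m - \log\G_{m-1} = m\log m - \log(m!)$, you get $\log\F_n = \sum_{k\le n}(\mu*\gamma)(k)$, and the Stirling expansion yields the exact formula $R_{\F}(n) = -\tfrac12\log(2\pi) - E(n)$ with $E(n) = \sum_{k\le n}(\mu*r)(k)$, whose Dirichlet series is $G(s)/\zeta(s)$ with $G(s) = \sum_m r(m)m^{-s}$ holomorphic and bounded on every half-plane $\Re s\ge\delta>0$. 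This isolates the zeta-zero dependence transparently. Part (1) and the ``RH $\Rightarrow$ bound'' direction of part (2) then follow either by the standard contour shifts you describe or by the elementary route $E(n) = \sum_m r(m)\,M(\lfloor n/m\rfloor)$ with $r(m)\ll 1/m$, quoting the corresponding bounds on the Mertens function; both versions are correct.

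The gap you flag in the converse direction of (2) is real and, as written, fatal. If $\zeta$ had a zero $\rho_0$ with $\Re\rho_0>\tfrac12$ at which $G$ vanished to the same or higher order, $G/\zeta$ would remain holomorphic through $\rho_0$ and the Landau oscillation argument applied to $E$ would detect nothing. Your remark that $r(m)>0$ makes $G$ zero-free on $\Re s\ge 1$ is true (it amounts to checking $2\,r(1)>G(1)$) but does not help, since $\zeta$ has no zeros there anyway; positivity of Dirichlet coefficients gives no control on zeros off the real axis in the strip $\tfrac12<\Re s<1$, which is exactly where you need it, and I see no cheap way to exclude common zeros of $G$ and $\zeta$ there. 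Without such a nonvanishing statement, or a restructuring of the argument (a mollified form of $E$, or a Landau-type argument run on $D_\gamma(s)/\zeta(s)$ directly, exploiting that the $\gamma_m$ themselves are positive for $m\ge 2$ rather than passing to the $\mu$-twisted remainder), the asserted equivalence is not established. You are right to identify this as ``where the real work lies''; it is the one non-routine step, and the proof as it stands does not close it.
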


\begin{proof}
The  remainder term bounds in results (1) and (2) parallel those for 
bounding  $R(x) :=\psi(x)-x$ given above.
Mikol\'{a}s's results are proved for $\log (F_n) = - \log (\F_n).$
Result (1) appears as  Theorem 1 of \cite{Mik51}. 
Result (2) appears as Theorem 2 of \cite{Mik51},
where the constant in the $O$-notation depends on $\epsilon$. 
His result also states that
the Riemann hypothesis implies  the stronger error term
$$
R_{\F}(n) = O\left( \sqrt{n} \exp\left( c \frac{(\log n)( \log\log\log n)}{\log\log n}\right)\right),
$$
valid for $n \ge 50$.
\end{proof}

\begin{rem} 
Since  there are exactly $\Phi(n)$ nonzero Farey fractions, Theorem \ref{th44} (1) shows that
from  the
viewpoint  of multiplication the average size of a Farey fraction (i.e. the geometric mean)
is asymptotically $\frac{1}{e}$ as $n \to \infty$.
\end{rem}

%
%
%

\subsection{Behavior of $\Phi(x)$ and $\psi(x)$}\label{sec32}

The encoding of the Riemann hypothesis in Theorem \ref{th43}
requires the inclusion of the oscillatory main term
$\Phi(x) - \frac{1}{2} \psi (x)$, whose fluctuations  
appear to lack  a simple description.

For $\psi(x)$ we have 
$$
\psi(x) = \frac{1}{2 \pi i} \int_{ c- i \infty}^{c + i \infty} -\frac{\zeta'(s)}{\zeta(s)} x^{-s} ds.
$$
The oscillations in $\psi(x)$ around $x$  are directly expressed in terms of the zeta zeros
 by Riemann's explicit formula. 
It is well known (Tenenbaum \cite[Sec. II.4.3]{Ten95}) that the  Riemann hypothesis is  equivalent to the assertion that
$$
\psi(x)  =  x+ O ( x^{\frac{1}{2} + \epsilon}),
$$
holds for each $\epsilon>0$ with a  constant in the $O$-notation 
that depends  on $\epsilon$.
Under the Riemann hypothesis, in  view of the above equation
the term $\psi(n)$ in \eqref{eq65} could be replaced by $n$ and the rest absorbed into the remainder term.

The   function $\Phi(x)$
which counts the number of positive Farey fractions of order $\lfloor x \rfloor$ is
\begin{equation}\label{RH1}
\Phi(x) := \sum_{k=1}^{\lfloor x \rfloor} \varphi (k),
\end{equation}
and can also be obtained by an inverse Mellin transform
$$
\Phi(x) = \frac{1}{2 \pi i} \int_{ c- i \infty}^{c + i \infty} \frac{\zeta(s-1)}{\zeta(s)} x^{-s} ds,
$$
valid for non-integer $x$.
Contour integral methods using this formula can extract the main term
$\frac{3}{\pi^2} x^2$ coming from the simple pole at $s=2$ of $\frac{\zeta(s-1)}{\zeta(s)}$.
It  is difficult to estimate the remainder term $E(x)$, which 
we define by
\begin{equation}\label{remainder}
E(x) := \Phi(x) - \frac{3}{\pi^2} x^2,
\end{equation}
There is a well-known estimate due to Mertens \cite[Sect. 1]{Mer1874},
\begin{equation}\label{phi-bound}
\Phi(x) = \frac{3}{\pi^2} x^2 + O( x \log x),
\end{equation}
see  Hardy and Wright \cite[Theorem 330]{HW79}.
 The current best upper bound on its size was given
in 1962  in A. Walfisz \cite[Chap. IV]{Wal62}, stating  that 
$$
E(x) =  O \big( x (\log x)^{\frac{2}{3}} (\log\log x)^{\frac{4}{3}}\big).
$$
It is also known that $E(x)$ has large oscillations,
with the current  best lower bound on the  size of the fluctuations of $E(x)$
being a 1987 
result of Montgomery \cite[Theorem 2]{Mon87}, stating that\footnote{Here $f(x) = \Omega_{\pm}(g(x))$ means
there is a positive constant such  that infinitely often
$f(x) > c |g(x)|$ and infinitely often $f(x) < - c |g(x)|$.}
$$
E(x) = \Omega_{\pm}( x \sqrt{\log\log x}).
$$
Montgomery formulated the following conjectures concerning
the order of magnitude of $E(x)$.

\begin{conj} {\em (Montgomery (1987))} 

(1) The remainder term $E(x)$ satisfies as $x \to \infty$ the  bound
$$
E(x) = O ( x \log\log x).
$$

(2) The remainder term $E(x)$ as $x \to \infty$ has maximal order of magnitude given by
$$
E(x) = \Omega_{\pm} (x \log\log x).
$$
\end{conj}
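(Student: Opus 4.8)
The conjecture is open, and any proof of either half would almost certainly proceed through the analytic structure of $\Phi(x)$ rather than through any combinatorics of Farey fractions. The starting point is the Perron-type representation recorded above,
$$
\Phi(x) = \frac{1}{2\pi i}\int_{c-i\infty}^{c+i\infty} \frac{\zeta(s-1)}{\zeta(s)}\,\frac{x^s}{s}\,ds \qquad (c>2),
$$
together with its elementary counterpart $\Phi(x) = \sum_{d\le x}\mu(d)\,T(x/d)$ with $T(y)=\sum_{q\le y}q$. Shifting the contour past the simple pole of $\zeta(s-1)$ at $s=2$ produces the residue $\frac{1}{\zeta(2)}\cdot\frac{x^2}{2} = \frac{3}{\pi^2}x^2$, i.e.\ the main term subtracted off in \eqref{remainder}, while the trivial zeros and the pole of $1/s$ at $s=0$ contribute $O(1)$. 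The quantity $E(x)$ is then governed by the nontrivial zeros $\rho$ of $\zeta(s)$, which are the poles of $1/\zeta(s)$; assuming --- as one unproven hypothesis among several --- that all these zeros are simple, one obtains, after truncating the resulting zero sum at a height $T$ and smoothing, a representation of $E(x)$ in which the contribution of $\rho$ is
$$
-\,\frac{\zeta(\rho-1)}{\zeta'(\rho)}\cdot\frac{x^\rho}{\rho}.
$$
Under the Riemann hypothesis $|x^\rho| = x^{1/2}$, which is where the exponent $\tfrac12$ in the conjectured bounds originates; writing $\rho = \tfrac12+i\gamma$, the functional equation gives $|\zeta(\rho-1)|\asymp|\gamma|$, so the $\rho$-term has size $\asymp x^{1/2}/|\zeta'(\rho)|$, and it is the density $\tfrac{1}{2\pi}\log|\gamma|$ of the ordinates $\{\gamma\}$, not the constant $\tfrac{3}{\pi^2}$, that will govern the answer.

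For the upper bound (1) the task becomes to prove that the truncated zero sum is $\ll x^{1/2}\log\log x$. Crude termwise estimates are hopeless --- they do not even reproduce Mertens' elementary bound \eqref{phi-bound} --- because the $\rho$-terms do not decay in $|\gamma|$; the saving must come entirely from near square-root cancellation among the oscillatory factors $x^{i\gamma} = e^{i\gamma\log x}$. Carrying this out would require hypotheses genuinely stronger than RH alone: a lower bound of the shape $|\zeta'(\rho)|^{-1}\ll|\gamma|^\epsilon$, conjectural mean-value (Gonek--Hejhal) estimates for the negative moments $\sum_{|\gamma|\le T}|\zeta'(\rho)|^{-1}$ and $\sum_{|\gamma|\le T}|\zeta'(\rho)|^{-2}$, and information of the strength of Montgomery's pair correlation conjecture on the vertical spacing of the $\gamma$'s, combined so as to bound the variance of the zero sum over dyadic ranges of $\log x$ by $O((\log\log x)^2)$.

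The $\Omega_{\pm}$ statement (2) requires a matching lower bound for that variance: one must produce arguments $x$ at which the zero sum exceeds $c\,x^{1/2}\log\log x$ with either sign. The standard device is a resonance / Dirichlet-box construction, in which $\log x$ is chosen so that $e^{i\gamma\log x}$ is simultaneously near $+1$ (respectively $-1$) for every $\gamma$ in a long initial segment of ordinates, with the length of the segment and the resulting reinforcement calibrated so that the tail of the series cannot cancel the main contribution. Montgomery's bound $E(x) = \Omega_{\pm}(x\sqrt{\log\log x})$ in \cite{Mon87} is precisely such a construction, built from the classical zero-counting estimates alone; upgrading $\sqrt{\log\log x}$ to $\log\log x$ again seems to demand new information about the spacing of the $\gamma$'s and the sizes $|\zeta'(\rho)|$.

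The main obstacle, in both directions, is therefore the behaviour of $1/\zeta(s)$ near the critical line, not the Farey-product side of the story, which contributes only the identity $\varphi=\mu*\mathrm{id}$ and the Mellin formula above. We accordingly expect a proof of either half of Montgomery's conjecture to be at least as hard as --- and structurally parallel to --- the analogous, equally open bounds $\Delta(x) = O(x^{1/2}\log\log x)$ for the Dirichlet divisor problem and $P(x) = O(x^{1/2}\log\log x)$ for the Gauss circle problem.
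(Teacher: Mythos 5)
The statement you were asked to ``prove'' is a conjecture of Montgomery, which the paper records as such without proof; it remains open, and you are right to refuse to attempt a proof and instead survey what a proof would require. Your assessment of the situation is sound and your technical sketch is essentially correct. You silently repair a typographical slip in the paper's Mellin formula (the paper writes $x^{-s}$ where the Perron representation requires $x^{s}/s$, as you have it), your residue computation at $s=2$ recovers $\frac{3}{\pi^2}x^2$ correctly, and the reduction to a zero sum of the shape $\sum_\rho \frac{\zeta(\rho-1)}{\zeta'(\rho)}\frac{x^\rho}{\rho}$, together with the functional-equation estimate $|\zeta(\rho-1)|\asymp|\gamma|$ at $\rho=\tfrac12+i\gamma$, is the standard starting point. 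The supporting context you invoke --- Gonek--Hejhal negative moment conjectures, pair correlation, resonance constructions, and the analogy with the divisor and circle problems --- is the right circle of ideas; indeed Montgomery's own $\Omega_{\pm}(x\sqrt{\log\log x})$ lower bound, which the paper cites, is exactly the resonance-type argument you describe, and closing the $\sqrt{\log\log x}$ gap up to $\log\log x$ in either direction is what remains open. One small sign-convention quibble: as written, the residue of $\frac{\zeta(s-1)}{\zeta(s)}\frac{x^s}{s}$ at a simple zero $\rho$ is $+\frac{\zeta(\rho-1)}{\zeta'(\rho)}\frac{x^\rho}{\rho}$, without the leading minus; this does not affect anything you say, since you only use the modulus. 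In short, there is no discrepancy with the paper because the paper offers no proof, and your essay is an accurate and well-calibrated account of why.
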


 In 2010  Kaczorowski and Wirtelak \cite{KW10a}, \cite{KW10b}  studied
in more detail  the oscillatory nature  of the remainder term
$E(x)$. These papers show that $E(x)$  can be split  as a sum of  two natural parts, an
arithmetic part and an analytic part, with the
 analytic part having a  direct connection to the zeta zeros.

%
%
%

\section{Reciprocal Farey product prime power divisibility.}\label{sec4a}

 We now consider the problem of understanding
the  behavior of $\ord_p(\F_n)$.
%
%
%

\subsection{ Farey product prime power divisibility: explicit formula}\label{sec41}

We now turn to prime power divisibility. We obtain the  following direct formula for prime power divisibility of $\ord_p(\F_n)$.
\begin{thm}\label{th43pp}
 The reciprocal Farey product $\F_n$ has prime power divisibility
\[
\ord_p(\F_{n})=  \sum_{b=1}^{\lfloor\log_p(n)\rfloor}\sum_{a=1}^{\big\lfloor\frac{n}{p^b}\big\rfloor}\left(\varphi(ap^b)\left(2- \left\lfloor\frac{n}{ap^b}\right\rfloor\right)- \Big(\sum_{{j|ap}\atop{ d \equiv n\, (\bmod ap^b)}}\mu(j)\left\lfloor\frac{d}{j}\right\rfloor\Big)\right)
\]
where $d\equiv n \, (\bmod \, ap^b)$ with $0 \le d \le ap^b-1$.
\end{thm}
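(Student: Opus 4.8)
The plan is to evaluate $\ord_p(\F_n)$ directly from the description $\F_n = D_n/N_n$, where $N_n$ and $D_n$ are the products of the numerators and of the denominators of the fractions $\frac{h}{k}\in\sF_n^{\ast}$, rather than routing through the M\"{o}bius inversion identity \eqref{eq42} and the known formula for $\ord_p(\G_m)$, which here would be more roundabout. Since $\gcd(h,k)=1$ for every Farey fraction,
\[
\ord_p(\F_n)=\ord_p(D_n)-\ord_p(N_n)=\sum_{h/k\in\sF_n^{\ast}}\bigl(\ord_p(k)-\ord_p(h)\bigr),
\]
and I would then write $\ord_p(m)=\#\{b\ge1:\,p^b\mid m\}$, interchange the order of summation, and note that only $1\le b\le\lfloor\log_p n\rfloor$ contributes since $h\le k\le n$.

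For the denominator term this is immediate: for fixed $b$ the fractions $\frac{h}{k}\in\sF_n^{\ast}$ with $p^b\mid k$ are exactly those with $k=ap^b$, $1\le a\le\lfloor n/p^b\rfloor$, and for each such $k$ there are $\varphi(ap^b)$ admissible numerators $h$; hence $\ord_p(D_n)=\sum_{b=1}^{\lfloor\log_p n\rfloor}\sum_{a=1}^{\lfloor n/p^b\rfloor}\varphi(ap^b)$.

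For the numerator term, fix $b$ and write $h=ap^b$ with $1\le a\le\lfloor n/p^b\rfloor$; each such pair $(a,b)$ contributes to $\ord_p(N_n)$ the quantity $\#\{k:\,ap^b\le k\le n,\ \gcd(k,ap^b)=1\}$. Because $ap^b$ and $ap$ have the same set of prime divisors the condition is $\gcd(k,ap)=1$, and since $ap^b$ itself fails this condition the count equals $A(n)-A(ap^b)$, where $A(x):=\#\{k\le x:\gcd(k,ap)=1\}=\sum_{j\mid ap}\mu(j)\lfloor x/j\rfloor$ by inclusion--exclusion. The decisive observation is that $j\mid ap$ implies $j\mid ap^b$; so writing $n=q\,ap^b+d$ with $q=\lfloor n/(ap^b)\rfloor$ and $0\le d<ap^b$ gives $\lfloor n/j\rfloor=q\,(ap^b/j)+\lfloor d/j\rfloor$ and $\lfloor ap^b/j\rfloor=ap^b/j$. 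Summing against $\mu(j)$, and using $\sum_{j\mid ap}\mu(j)/j=\varphi(ap)/(ap)$ together with the elementary identity $\varphi(ap^b)=p^{b-1}\varphi(ap)=\tfrac{ap^b}{ap}\varphi(ap)$ (valid for all $a\ge1,\ b\ge1$, proved by splitting off the exact power of $p$ dividing $a$), makes both $\sum_{j\mid ap}\mu(j)(ap^b/j)$ and $A(ap^b)$ equal to $\varphi(ap^b)$. Hence $A(n)-A(ap^b)=(q-1)\varphi(ap^b)+\sum_{j\mid ap}\mu(j)\lfloor d/j\rfloor$, and subtracting $\ord_p(N_n)$ from $\ord_p(D_n)$ term by term yields the asserted summand $(2-q)\varphi(ap^b)-\sum_{j\mid ap}\mu(j)\lfloor d/j\rfloor$ with $q=\lfloor n/(ap^b)\rfloor$ and $d\equiv n\pmod{ap^b}$, which is exactly the claimed formula.

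I do not expect a serious obstacle; the work is essentially bookkeeping. The points that need care are: that $\#\{k<ap^b:\gcd(k,ap)=1\}=A(ap^b)$ because $p\mid ap^b$; the identity $\varphi(ap^b)=p^{b-1}\varphi(ap)$; the reduction of $\lfloor n/j\rfloor$ modulo $ap^b$, which is where the residue $d$ enters and which depends on $j\mid ap^b$; and the harmless boundary case $ap^b\mid n$, where $d=0$ and the correction sum vanishes, consistent with the stated range for $d$.
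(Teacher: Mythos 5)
Your proposal is correct and follows essentially the same route as the paper: it computes $\ord_p(D_n)$ and $\ord_p(N_n)$ separately by counting, for each power $p^b$, the denominators $k=ap^b$ (contributing $\varphi(ap^b)$ each) and, for each numerator $h=ap^b$, the coprime denominators in $[ap^b,n]$ via inclusion--exclusion over $j\mid ap$, with the residue $d\equiv n\pmod{ap^b}$ accounting for the incomplete final block. Your write-up is somewhat more explicit than the paper's (in particular the verification that $A(ap^b)=\varphi(ap^b)$ and the reduction of $\lfloor n/j\rfloor$ using $j\mid ap^b$), but the decomposition and the counting argument coincide with the paper's Lemmas on $\ord_p(D_n)$ and $\ord_p(N_n)$.
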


In this formula the M\"{o}bius function appears explicitly, but it is  implicitly present in each Euler totient term $\varphi(a p^b)$ as well.
To prove this result, we write
$F_n = \prod_{r=1}^{\Phi(n)} \rho_r = \frac{N_n}{D_n},$
with $N_n, D_n$ being the product of the numerators (resp. denominators) of all the $\rho_r$.
We  find expressions for $\ord_p(N_n)$ and $\ord_p(D_n)$ separately.

\begin{lem}\label{lem-Dn} The Farey product denominator $D_n$  has
\[
\ord_p(D_n)=\sum_{b=1}^{\lfloor\log_p(n)\rfloor} \Big(\sum_{a=1}^{\big\lfloor\frac{n}{p^b}\big\rfloor}\varphi(ap^b) \, \Big).
\]
\end{lem}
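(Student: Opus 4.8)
The plan is to compute $\ord_p(D_n)$ by directly tracking which primes-power factors enter the product $D_n = \prod_{r=1}^{\Phi(n)} k_r$, where $k_r$ is the denominator of the $r$-th nonzero Farey fraction $\rho_r \in \sF_n^{\ast}$. First I would observe that, since each reduced fraction $\frac hk$ with $1 \le h \le k \le n$ and $\gcd(h,k)=1$ occurs exactly once in $\sF_n^{\ast}$, the denominator product is
\[
D_n = \prod_{k=1}^n k^{\,\#\{h:\, 1\le h\le k,\ \gcd(h,k)=1\}} = \prod_{k=1}^n k^{\varphi(k)},
\]
using that the number of valid numerators for denominator $k$ is exactly $\varphi(k)$ (the count of $h$ in $[1,k]$ coprime to $k$; note $h=k$ contributes only when $k=1$, consistent with $\varphi(1)=1$). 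Hence
\[
\ord_p(D_n) = \sum_{k=1}^n \varphi(k)\,\ord_p(k).
\]

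Next I would convert the weight $\ord_p(k)$ into a sum over prime-power divisors via the standard identity $\ord_p(k) = \sum_{b \ge 1} \mathbf{1}[p^b \mid k]$, which is the key combinatorial step. Substituting and interchanging the order of summation gives
\[
\ord_p(D_n) = \sum_{b=1}^{\lfloor \log_p n\rfloor} \ \sum_{\substack{k=1\\ p^b \mid k}}^{n} \varphi(k).
\]
The inner sum ranges over $k$ that are multiples of $p^b$ with $k \le n$; writing $k = a p^b$ with $1 \le a \le \lfloor n/p^b\rfloor$ turns it into $\sum_{a=1}^{\lfloor n/p^b\rfloor} \varphi(a p^b)$, which is exactly the claimed formula. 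The upper limit $\lfloor \log_p n\rfloor$ on $b$ is forced because $p^b \le n$ is needed for the inner sum to be nonempty.

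The proof is genuinely short; there is no serious obstacle, only two points requiring a word of care. One is the edge case $k=1$: the fraction $\frac11$ has denominator $1$, contributing $\ord_p(1)=0$, so it is harmless, and $\varphi(1)=1$ matches the count $\#\{h: 1\le h\le 1,\ \gcd(h,1)=1\}=1$. The other is simply justifying the interchange of the finite double sum, which is automatic. Thus the main work is the bookkeeping identity $D_n = \prod_{k=1}^n k^{\varphi(k)}$ together with the expansion $\ord_p(k)=\sum_{b\ge1}\mathbf{1}[p^b\mid k]$; everything else is reindexing. This lemma will then be combined with the companion computation of $\ord_p(N_n)$ to yield Theorem \ref{th43pp}.
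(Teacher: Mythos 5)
Your proposal is correct and follows essentially the same route as the paper: the paper likewise observes that each denominator $k$ occurs $\varphi(k)$ times and counts each power of $p$ separately so that a denominator divisible by exactly $p^b$ is counted $b$ times across the inner sums. Your write-up just makes the bookkeeping ($D_n=\prod_k k^{\varphi(k)}$ and $\ord_p(k)=\sum_{b\ge 1}\mathbf{1}[p^b\mid k]$) explicit.
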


\begin{proof}
The prime $p$ appears in the denominator of a Farey fraction only if the denominator is itself a multiple of $p$. Using this fact, 
the order of $p$ dividing the product of all Farey fractions with denominator is $a_1p^b$, where $1\le a_1\le p-1$, is $b\varphi(a_1p^b)$. 
We count each power of $p$ separately, so to count the $b$-th power we let  $a$ go up to $\lfloor\frac{n}{p^b}\rfloor$, 
this means that a particular $a p^b$ will be counted separately $b$ times. 
This yields the result. \end{proof}

\begin{lem}\label{lem-Nn} The Farey product numerator $N_n$ has
\begin{equation}
\ord_p(N_n)=\sum_{b=1}^{\lfloor\log_p(n)\rfloor}\sum_{a=1}^{\big\lfloor\frac{n}{p^b}\big\rfloor}\left(\varphi(ap^b)\left(\left\lfloor\frac{n}{ap^b}\right\rfloor-1\right)+\sum_{{j|ap}\atop{ d \equiv n\, (\bmod ap^b)}}\mu(j)\left\lfloor\frac{d}{j}\right\rfloor\right).
\end{equation}
In the last sum $d \equiv  n \,(\bmod \, ap^b)$ with $0\le d<ap^b$.
\end{lem}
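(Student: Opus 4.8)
The plan is to compute $\ord_p(N_n)$ by grouping the numerators of the Farey fractions $\rho_{r,n}$ according to their denominator. A fraction in $\sF_n^\ast$ with denominator $k$ has the form $\frac{h}{k}$ with $1 \le h \le k$ and $\gcd(h,k)=1$, so the numerators appearing with a fixed denominator $k$ are exactly the integers $h$ in $[1,k]$ coprime to $k$. Only those $k$ divisible by $p$ contribute powers of $p$ to $N_n$, so I write $k = ap^b$ with $\gcd(a,p)=1$ is \emph{not} assumed — instead, following Lemma~\ref{lem-Dn}, I count each power level $b$ from $1$ to $\lfloor\log_p n\rfloor$ separately and let $a$ range over $1 \le a \le \lfloor n/p^b\rfloor$, so that a denominator $ap^b$ with $p^c \| ap^b$ gets counted once for each $b \le c$. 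For a fixed denominator $k$, the quantity I need is $\ord_p$ of the product $\prod_{h} h$ over $1 \le h \le k$, $\gcd(h,k)=1$, but summed over all Farey orders $n' \le n$ for which this $k$ actually occurs; equivalently the numerator $k$-block in $\sF_n^\ast$ contributes $\sum \ord_p(h)$ over $h$ coprime to $k$ with $1 \le h \le k$, and this block is present so I should count how the $\lfloor n/(ap^b)\rfloor$ repeated appearances (across scaled copies $\frac{th}{tk}$ — no, those are not reduced) enter. Let me reorganize: actually the Farey numerators with denominator exactly $k$ appear exactly once in $\sF_n^\ast$ provided $k \le n$; but the count $\lfloor n/(ap^b)\rfloor - 1$ in the statement signals that the relevant multiplicity comes from a different grouping, presumably via the chain $N_n = \prod_k \prod_{\gcd(h,k)=1} h$ reorganized using that $\{h : 1\le h \le k, p^b | h\}$ has a clean description.

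The key computational step is a formula for $T_p(k) := \sum_{\substack{1\le h\le k\\ \gcd(h,k)=1}} \ord_p(h)$. I would evaluate this by the standard device $\ord_p(h) = \sum_{b \ge 1} [p^b \mid h]$, giving $T_p(k) = \sum_{b\ge 1} \#\{h : 1 \le h \le k,\ p^b \mid h,\ \gcd(h,k)=1\}$. The inner count is over multiples of $p^b$ in $[1,k]$ that are coprime to $k$; writing $k = ap^c$ with $\gcd(a,p)=1$, this count is $0$ unless $b \le c$... but then $p^b \mid h$ and $p \mid k$ forces $\gcd(h,k) \ge p$, contradiction. So in fact for $p \mid k$ we get $T_p(k) = 0$: no reduced Farey fraction with denominator a multiple of $p$ has a numerator divisible by $p$. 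Hence the powers of $p$ in $N_n$ come only from numerators of fractions whose denominator is coprime to $p$. This forces me to track, for each numerator value $m = a' p^b$ with $\gcd(a',p)=1$ and each power $p^b \| m$, how many denominators $k$ coprime to $p$ with $1\le m \le k \le n$ and $\gcd(m,k)=1$ occur — which is where $\lfloor n/(\cdot)\rfloor$ counts and where the Möbius/floor sum $\sum_{j \mid ap}\mu(j)\lfloor d/j \rfloor$ enters as a gcd-coprimality indicator summed over a range, $d$ being the residue of $n$ modulo $ap^b$.

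Concretely I expect the heart of the argument to be a double-counting identity: $\ord_p(N_n) = \sum_{m \le n} \ord_p(m) \cdot \#\{k : m < k \le n,\ \gcd(m,k)=1\}$ (plus the term from $k=m$, which as noted contributes nothing since $p \mid m$ would make that fraction non-reduced only if $m=k$, i.e. $m=k=1$). I would then substitute $\ord_p(m) = \sum_{b\ge 1}[p^b\mid m]$, swap the order of summation to sum over $b$ first and write $m = a p^b$, giving $\sum_b \sum_{a \le \lfloor n/p^b\rfloor} \#\{k : ap^b < k \le n,\ \gcd(k, ap^b)=1\}$. The count of $k \le n$ coprime to $ap^b$ is, by inclusion–exclusion over the squarefree kernel of $ap^b$ (which equals the kernel of $ap$), $\sum_{j \mid \mathrm{rad}(ap)} \mu(j)\lfloor n/j\rfloor$; subtracting the count of such $k$ in $[1, ap^b]$ and writing $n = q\cdot ap^b + d$ with $0 \le d < ap^b$ converts $\lfloor n/j\rfloor$ into $q\lfloor ap^b/j\rfloor + \lfloor d/j\rfloor$, the $q\lfloor ap^b/j\rfloor$ part collapsing (via $\sum_{j\mid\mathrm{rad}(ap)}\mu(j)\lfloor ap^b/j\rfloor = \varphi(ap^b)$) to $\varphi(ap^b)\cdot q = \varphi(ap^b)(\lfloor n/(ap^b)\rfloor)$, and after removing the $k \le ap^b$ block (contributing $\varphi(ap^b)$, i.e. the ``$-1$'') one is left with $\varphi(ap^b)(\lfloor n/(ap^b)\rfloor - 1) + \sum_{j \mid ap^b,\ \mathrm{sqfree}} \mu(j)\lfloor d/j\rfloor$, matching the stated formula (with $j \mid ap$ indexing the squarefree divisors). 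The main obstacle — and the one step I would be most careful about — is the precise bookkeeping of the endpoints: whether the $k = ap^b$ fraction itself, and the boundary residue $d \in \{0, ap^b\}$ cases, are correctly included or excluded so that the ``$2 - \lfloor n/(ap^b)\rfloor$'' in Theorem~\ref{th43pp} emerges correctly after subtracting this $\ord_p(N_n)$ from $\ord_p(D_n) = \sum_b\sum_a \varphi(ap^b)$; a sign or off-by-one error in the $d$-range convention ($0 \le d < ap^b$ versus $0 \le d \le ap^b$, which the statement flags explicitly) is exactly where such a proof typically goes wrong, so I would verify it against the worked values $\F_7 = 3\cdot 5^2\cdot 7^6/2$.
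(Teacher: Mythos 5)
Your proposal is correct and, after the initial (correctly-abandoned) detour through grouping by denominator, lands on the same argument as the paper: count Farey pairs $(h,k)$ with $p^b\mid h$ by writing $h=ap^b$ and counting denominators $k\in(ap^b,n]$ coprime to $ap^b$ via periodicity of the complete residue system plus an inclusion--exclusion count over squarefree $j\mid ap$ on the partial block of length $d$. Your bookkeeping of the $q-1$ full periods (hence the $\lfloor n/(ap^b)\rfloor-1$) and the residue range $0\le d<ap^b$ matches the paper's proof, so this is essentially the same approach.
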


\begin{proof}
We count the number of times a given term $ap^b$  appears in the numerators of the Farey fractions,
as the denominators vary from $1$ to $n$.
For any consecutive $a p^b$ denominators, there are $\varphi(ap^b)$ numbers relatively prime to $a p^b$. 
The complete residue system of denominators $ (\bmod \, ap^b)$ is cycled through exactly
$\varphi(ap^b)\left(\left\lfloor\frac{n}{ap^b}\right\rfloor-1\right)$
times. 
Finally there 
is a partial residue system of remaining denominators of Farey fractions
of length $d$, where $d$ is the least nonnegative residue with $d \equiv n \, (\bmod \, ap^b)$.
The relatively prime denominators in this interval are counted
by the  term $ \sum_{j|ap}\mu(j)\left\lfloor\frac{d}{j}\right\rfloor$.
 This concludes the proof.
\end{proof}

\begin{proof}[Proof of Theorem \ref{th43pp}]
Using $\ord_p(\F_n)=\ord_p(D_n)-\ord_p(N_n)$, the result follows from Lemmas \ref{lem-Dn} and \ref{lem-Nn}.
\end{proof}

\begin{rem} \label{rem44}
The value of $\ord_p(\F_n)$ is the result of a race between the contribution of its numerator $\ord_p(D_n)$ and denominator $\ord_p(N_n)$.
These two quantities have a quite different form as arithmetic sums given 
in Lemma \ref{lem-Dn} and \ref{lem-Nn}. In the case of the unreduced
Farey products $\G_n$, the difference between numerator and denominator contributions is very 
pronounced, where the corresponding denominator 
contribution $\ord_p(D_n^{\ast})$ has very large size
at prime powers and is zero when $(n,p) =1$, while the numerator $\ord_p(N_n^{\ast})$ increases at a rather steady rate as a function of $n$.
\end{rem}

The formula of Lemma \ref{lem-Dn} yields the  following estimate of the size of $\ord_p(D_n)$.

\begin{lem}\label{lem-Dn2} For a fixed prime $p$, the  Farey product denominator $D_n$  as $n \to \infty$ has
\[
\ord_p(D_n)= \left(\frac{1}{p-1} - \frac{1}{p^2}\right) \frac{3}{\pi^2} n^2 + O ( n (\log n )^2),
\]
where the implied constant in the $O$-symbol depends on $p$.
\end{lem}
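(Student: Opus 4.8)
The plan is to start from the exact formula of Lemma \ref{lem-Dn},
\[
\ord_p(D_n)=\sum_{b=1}^{\lfloor\log_p n\rfloor}\ \sum_{a=1}^{\lfloor n/p^b\rfloor}\varphi(ap^b),
\]
and evaluate the inner double sum asymptotically. First I would handle a fixed value of $b$. The inner sum $\sum_{a\le n/p^b}\varphi(ap^b)$ is a sum of $\varphi$ along the arithmetic-progression-like set of multiples of $p^b$; writing $\varphi(ap^b)=p^{b-1}\varphi(p)\,\varphi(a)$ when $p\nmid a$ and $\varphi(ap^b)=p^b\varphi(a)$ when $p\mid a$, one reduces everything to partial sums of $\varphi$. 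So the key auxiliary computation is: for fixed $p$, evaluate $\sum_{a\le X,\ p\nmid a}\varphi(a)$ and $\sum_{a\le X,\ p\mid a}\varphi(a)$ with error term $O(X\log X)$, using the Mertens estimate \eqref{phi-bound}, namely $\Phi(X)=\sum_{a\le X}\varphi(a)=\frac{3}{\pi^2}X^2+O(X\log X)$. Splitting off multiples of $p$ recursively (or just once, since $\sum_{p\mid a\le X}\varphi(a)=\sum_{m\le X/p}\varphi(pm)$ and $\varphi(pm)$ is $(p-1)\varphi(m)$ or $p\varphi(m)$ according as $p\nmid m$ or $p\mid m$) gives a geometric-type identity; I expect to get
\[
\sum_{a\le X,\ p\nmid a}\varphi(a)=\Big(1-\tfrac1{p^2}\Big)\Big(1-\tfrac1p\Big)^{-1}\!\!\cdots
\]
— more precisely, the cleanest route is to note $\sum_{a\le X}\varphi(ap^b)$ is, up to the $O(X\log X)$ error, a multiplicative constant times $X^2$, and to track that constant carefully.

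Second, I would plug the per-$b$ asymptotics back into the sum over $b$ and sum the resulting geometric series in $b$. After dividing through, the coefficient of $n^2$ should collapse to $\frac{3}{\pi^2}$ times
\[
\sum_{b\ge 1}\frac{1}{p^{2b}}\cdot(\text{main constant from the }a\text{-sum with }X=n/p^b),
\]
and the arithmetic of this geometric sum is exactly what produces the stated constant $\frac{1}{p-1}-\frac{1}{p^2}$. (Sanity check: the $b=1$ term alone should contribute $\frac{3}{\pi^2}n^2\cdot\frac1{p^2}\cdot\frac{\text{const}}{1}$, and summing the tail $b\ge 1$ must rebuild $\frac1{p-1}-\frac1{p^2}$; one can double-check against the identity $\ord_p(D_n)=\sum_{\ell=1}^n\ord_p(D_{\lfloor n/\ell\rfloor}^{\ast})$-type relations implicit in Theorem \ref{th41}, or simply verify the constant for a tiny case.)

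Third, the error terms: there are $\lfloor\log_p n\rfloor=O(\log n)$ values of $b$, and for each the error in the $a$-sum is $O\big((n/p^b)\log(n/p^b)\big)=O(n\log n)$ after multiplying back by the $p^{b-1}$ or $p^b$ factor — wait, one must be careful here, since $\varphi(ap^b)\asymp p^b\varphi(a)$ inflates the error by $p^b$, but the range of $a$ is only up to $n/p^b$, so $\sum_{a\le n/p^b}p^b\cdot O(\text{error per term})$ still telescopes to $O(n\log n)$ when summed honestly rather than termwise; the right way is to keep the error inside the partial-sum estimate for $\Phi$ applied at $X=n/p^b$ and then multiply by the constant $p^b$, giving $p^b\cdot O\big((n/p^b)\log n\big)=O(n\log n)$ per $b$. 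Summing over the $O(\log n)$ values of $b$ yields the claimed $O(n(\log n)^2)$.

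The main obstacle I anticipate is bookkeeping the multiplicative constant through the two nested summations without sign or $p$-power errors — in particular getting the interaction between "$p\mid a$ versus $p\nmid a$" inside the $a$-sum to mesh correctly with the outer geometric sum over $b$ so that the final constant is exactly $\frac{1}{p-1}-\frac{1}{p^2}$ and not some near-miss. A clean way to sidestep part of this is to compute $\sum_{b\ge1}\sum_{a\ge1}\varphi(ap^b)x^{ap^b}$-style generating-function identities, or equivalently to recognize $\ord_p(D_n)$ as $\sum_{k\le n,\ p\mid k}v_p(k)\varphi(k)$ and evaluate $\sum_{p\mid k\le n}v_p(k)\varphi(k)$ directly: grouping by $v_p(k)=b$ gives $\sum_b b\big(\Phi_{p^b}(n)-\Phi_{p^{b+1}}(n)\big)$ where $\Phi_{m}(n):=\sum_{m\mid k\le n}\varphi(k)$, then Abel summation in $b$ reduces it to $\sum_{b\ge1}\Phi_{p^b}(n)$, and $\Phi_{p^b}(n)=\frac{3}{\pi^2}\cdot\frac{p-1}{p}\cdot\frac{n^2}{p^b}+O(n\log n)$ (restriction to multiples of $p^b$ scales the density by $\frac{1}{p^b}\cdot\frac{\varphi(p)}{p}=\frac{p-1}{p^{b+1}}$). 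Summing $\sum_{b\ge1}\frac{p-1}{p^{b+1}}=\frac{1}{p}$ — hmm, that gives $\frac1p$, so I'd recheck the density factor; the correct statement should be $\Phi_{p^b}(n)=\frac{3}{\pi^2}n^2\big(\frac1{p^{2b}}+\frac1{p^{2b-1}}-\frac1{p^{2b}}\big)$-type expression, and summing that over $b\ge1$ must land on $\frac{1}{p-1}-\frac{1}{p^2}$; pinning down this one density constant for "$\varphi$ summed over multiples of $p^b$" is the single calculation everything hinges on, and once it is correct the rest is routine.
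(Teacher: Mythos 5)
Your overall route is the same as the paper's: start from Lemma \ref{lem-Dn}, use $\varphi(ap^b)=p^{b-1}(p-1)\varphi(a)$ for $p\nmid a$ and $\varphi(ap^b)=p^{b}\varphi(a)$ for $p\mid a$ to reduce the inner sums to partial sums of $\varphi$ at the scaled arguments $n/p^b$, invoke the Mertens estimate \eqref{phi-bound}, sum the geometric series in $b$, and bound the error by $O(\log n)$ values of $b$, each contributing $p^b\cdot O\big((n/p^b)\log n\big)=O(n\log n)$. That error analysis matches the paper's and is correct.

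The genuine gap is the one you flagged yourself: the constant is never pinned down, and your candidate density is wrong. The quantity everything hinges on is $\Phi_{p^b}(n):=\sum_{p^b\mid k\le n}\varphi(k)$. Restricting a $\varphi$-weighted sum to multiples of $p^b$ does \emph{not} scale the main term by $\frac{1}{p^b}\cdot\frac{\varphi(p)}{p}$; the correct factor comes from the Euler product $\sum_k\varphi(k)k^{-s}=\zeta(s-1)/\zeta(s)$, whose $p$-factor at $s=2$ is $1+\frac1p$ and whose tail $\sum_{j\ge b}\varphi(p^j)p^{-2j}$ equals $p^{-b}$. This gives $\Phi_{p^b}(n)=\frac{3}{\pi^2}\,\frac{p^{1-b}}{p+1}\,n^2+O(n\log n)$, and summing over $b\ge1$ yields the coefficient $\sum_{b\ge1}\frac{p^{1-b}}{p+1}=\frac{p}{p^2-1}$, not $\frac{1}{p-1}-\frac{1}{p^2}$ (for $p=2$: $\frac23$ versus $\frac34$). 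So if you carry your plan out correctly you will not land on the constant in the statement. Be aware that the paper's own proof commits exactly the density slip you were worried about: in rewriting Lemma \ref{lem-Dn} it replaces $\sum_{p\mid a\le X}\varphi(a)$ by $p\,\Phi(X/p)$, i.e.\ it treats $\varphi(pa')$ as $p\varphi(a')$ for every $a'$, which overestimates the $\varphi$-mass on multiples of $p$ (the true proportion of $\Phi(X)$ carried by multiples of $p$ is asymptotically $\frac{1}{p+1}$, not $\frac{1}{p}$; compare $\sum_{2\mid a\le 30}\varphi(a)=95$ with $2\Phi(15)=144$). Your instinct to recheck the density factor was therefore exactly right, and the check applies to the lemma's stated constant as well; none of this affects the rest of the paper, which uses only that $\ord_p(D_n)$ and $\ord_p(N_n)$ share the same $c_p\frac{3}{\pi^2}n^2$ main term up to $O(n(\log n)^2)$.
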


\begin{proof}
We can rewrite the formula of Lemma \ref{lem-Dn} as
$$
\ord_p(D_n) = \sum_{b=1}^{\lfloor \log_p n\rfloor} \Phi\left(\frac{n}{p^b}\right)p^{b-1}(p -1) + \Phi\left(\frac{n}{p^{b+1}}\right) p^{b},
$$
where $\Phi(x)$ is given by \eqref{RH1}.
Here we used the fact  that $\Phi(ap^b) = \Phi(a) p^{b-1}(p-1)$ if $p \nmid a$ and is $\Phi(a) p^{b}$ if $p | a$. We then obtain
$$
\ord_p(D_n) = \Phi\left(\frac{n}{p}\right) (p-1) + \sum_{j=2}^{\lfloor \log_p n\rfloor} \Phi\left( \frac{n}{p^j}\right) p^j.
$$
We use the formula $\Phi(x) = \frac{3}{\pi^2} x^2 + E(x)$  to obtain
$$
\ord_p(D_n) = \frac{3}{\pi^2} \left(\frac{n}{p}\right)^2(p-1) + \sum_{j=2}^{\lfloor \log_p n\rfloor} \frac{3}{\pi^2} \left(\frac{n}{p^j}\right)^2 p^j
 + \tilde{E}(n), 
$$
in which
$$
\tilde{E}(n) := E\left(\frac{n}{p}\right) (p-1) + \sum_{j=2}^{ \lfloor \log_p n\rfloor} E\left( \frac{n}{p^j}\right) p^j.
$$
Using $E(x) = O( x \log x)$ one easily obtains
$$
\tilde{E}(x) = O ( x (\log x)^2).
$$
The main term simplifies to 
$$
\left( \frac{1}{p} + \frac{1}{p^3} +\frac{1}{p^4} +  \cdots + \frac{1}{p^{\lfloor \log_p n\rfloor} } \right) \frac{3}{\pi^2} n^2 = 
\left(\frac{1}{p-1} - \frac{1}{p^2}\right)  \frac{3}{\pi^2} n^2
+ O(n),
$$
as asserted. 
\end{proof}

The quantities  $\ord_p(D_n)$ and $\ord_p(N_n)$ 
must be roughly the same size, because  their difference $\ord_p(\F_n)$ is
of much smaller magnitude. The size of the difference is 
upper bounded using a sharp estimate of the size of 
unreduced Farey products $\G_n$.

\begin{thm} \label{th44} 
We have
$$
| \ord_p(\F_n)| \le   n (\log n)^2.
$$
\end{thm}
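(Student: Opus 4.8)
The plan is to discard all cancellation in the M\"{o}bius sum and estimate term by term, using the sharp upper bound for $\ord_p(\G_m)$ from Theorem \ref{th29} as the only nontrivial input; indeed the sentence preceding the statement already flags that ``a sharp estimate of the size of $\G_n$'' is what supplies the bound. Concretely, I would begin with the M\"{o}bius inversion identity \eqref{eq42} of Theorem \ref{th41},
\begin{equation*}
\ord_p(\F_n) = \sum_{\ell=1}^n \mu(\ell)\,\ord_p(\G_{\lfloor n/\ell\rfloor}),
\end{equation*}
and pass to absolute values, using $|\mu(\ell)|\le 1$ and the nonnegativity $\ord_p(\G_m)\ge 0$ from Theorem \ref{th29}, to get $|\ord_p(\F_n)|\le \sum_{\ell=1}^n \ord_p(\G_{\lfloor n/\ell\rfloor})$. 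Then I would substitute $\ord_p(\G_m)< m\log_p m$ (again Theorem \ref{th29}): the summand vanishes when $\lfloor n/\ell\rfloor\le 1$ since $\ord_p(\G_1)=0$, and otherwise, because $t\mapsto t\log_p t$ increases for $t\ge1$ and $\lfloor n/\ell\rfloor\le n/\ell$, it is at most $\tfrac n\ell\log_p\tfrac n\ell\le\tfrac{n}{\ell\log2}\log\tfrac n\ell$, using $p\ge2$.

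What remains is the routine estimate of $\sum_{\ell=1}^n \ell^{-1}\log(n/\ell)$: the function $x\mapsto x^{-1}\log(n/x)$ is nonincreasing on $[1,n]$, so comparing the terms $\ell\ge2$ with the integral and retaining the $\ell=1$ term gives
\begin{equation*}
\sum_{\ell=1}^n \frac{\log(n/\ell)}{\ell}\ \le\ \log n+\int_1^n\frac{\log(n/x)}{x}\,dx\ =\ \log n+\tfrac12(\log n)^2 .
\end{equation*}
Hence $|\ord_p(\F_n)|\le \tfrac{n}{\log2}\bigl(\log n+\tfrac12(\log n)^2\bigr)$, which is $O(n(\log n)^2)$ and in particular is $\le n(\log n)^2$ once $\log n$ exceeds a small absolute constant. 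For the finitely many smaller $n$ one checks the inequality directly, the list being shortened by the observation that $\ord_p(\F_n)=0$ whenever $n<p$ (the prime $p$ divides no numerator or denominator of any fraction in $\sF_n^{\ast}$), and in the surviving range $|\ord_p(\F_n)|$ is tiny.

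I do not expect a genuine obstacle here: this is precisely the ``easy bound'' announced in Section~\ref{sec12}, and all the real difficulty has already been absorbed into Theorem \ref{th29}. The only mildly delicate point is tracking the numerical constant in the summation step so as to land on the clean form $n(\log n)^2$ rather than merely $O(n(\log n)^2)$. One could of course do far better --- exploiting cancellation of $\mu(\ell)$ via Mertens-type bounds as in the splitting \eqref{204}, or analyzing the ``race'' between $\ord_p(N_n)$ and $\ord_p(D_n)$ in the spirit of Remark \ref{rem44} and Lemma \ref{lem-Dn2} --- but none of that sharpening is needed for the stated inequality.
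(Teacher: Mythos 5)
Your proof follows exactly the paper's route: apply the M\"obius inversion formula \eqref{eq42}, discard all cancellation via $|\mu(\ell)|\le 1$ together with $0\le\ord_p(\G_m)<m\log_p m$ from Theorem~\ref{th29}, and estimate the resulting divisor-type sum. The only difference is bookkeeping — you retain the factor $1/\log p$ and bound $\sum_{\ell\le n}\ell^{-1}\log(n/\ell)$ by an integral comparison, whereas the paper silently replaces $\log_p$ by $\log$ and writes $H_n\le\log n$ (which is literally false, since $H_n=\log n+\gamma+o(1)$, though harmless at the $O(n(\log n)^2)$ scale), so your version is if anything slightly more careful about the constant and correctly flags the finite small-$n$ check.
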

\begin{rem}\label{rem47}
We suggest below that the true order of magnitude of $\ord_p (\F_n)$ is $O(n \log n)$,
see Property (P4)  in  Sect. \ref{sec43}.
\end{rem}

\begin{proof}
Using the  upper bound $\ord_p(\G_n) < n \log_p n$ of Theorem \ref{th29}  together with 
the formula  \eqref{eq42} relating $\ord_p(\F_n)$ to various $\ord_p(\G_n)$ yields
$$
|\ord_p(\F_n)| \le \sum_{k=1}^n |\mu(k)| \ord_p (\G_{\lfloor n/k\rfloor}) \le \sum_{k=1}^n \frac{n}{k} \log \frac{n}{k} \le  n (\log n)H_n  \le n(\log n)^2,
$$
where the last inequality used  the bound $H_n = \sum_{k=1}^n \frac{1}{k} \le \log n$.
\end{proof}

 Theorem \ref{th44} when  combined with Lemma \ref{lem-Dn2} yields the asymptotic estimate
 for the numerators
\begin{equation}
\ord_p(N_n) = \left(\frac{1}{p-1} - \frac{1}{p^2}\right) \frac{3}{\pi^2} n^2 + O ( n (\log n )^2),
\end{equation} 
since $\ord_p(\F_n) = \ord_p(D_n) - \ord_p(N_n)$.

%
%
%

\subsection{Behavior of $\ord_p(\F_n)$: empirical data}\label{sec42}

We made an empirical investigation of 
the prime power divisibility of $\ord_p(\F_n)$
for small primes $p$, and based on the data, we formulate four hypotheses
about the behavior of these functions. 
The amount of the computation increases as $p$ increases,
and we present data here for  $p=2$, and for $p=3$ in an Appendix). 
Figure \ref{fig41-ord2} plots  the values of $\ord_2(\F_n)$, ordered by $n$.

The distribution of points for $\ord_2 (\F_n)$ is more scattered than for $\ord_2(\G_n)$ 
(compare Figure \ref{fig21-ord2}) and  includes
many  negative values.  The ``streaks"  in $\ord_2(\G_n)$ visible in Figure \ref{fig21-ord2} are gone.
Figure \ref{fig41-ord2}  shows large positive  jumps in $\ord_2(\F_n)$ between $n= p^k -1$ and $n=p^k$ for $p=2$.
This fact can be proved for all primes $p$, by noting that
$$
\ord_p(\F_{p^k}) - \ord_p(\F_{p^k -1}) = k p^{k-1}(p-1).
$$
 This jumping behavior at powers of $p$ parallels  that for $\ord_{p} (\G_n)$,
 where \eqref{sharp-bd} states
  $$
 \ord_p(\G_{p^k}) - \ord_p(\G_{p^k -1}) = k p^k - \frac{p^k -1}{p-1}.
 $$
We see that the jump magnitude for $\ord_p(\F_{p^k})$ is scaled down from that of $\ord_p(\G_{p^k})$
by a factor approximately $1-1/p$.


\begin{figure}[!htb]
\includegraphics[width=130mm]{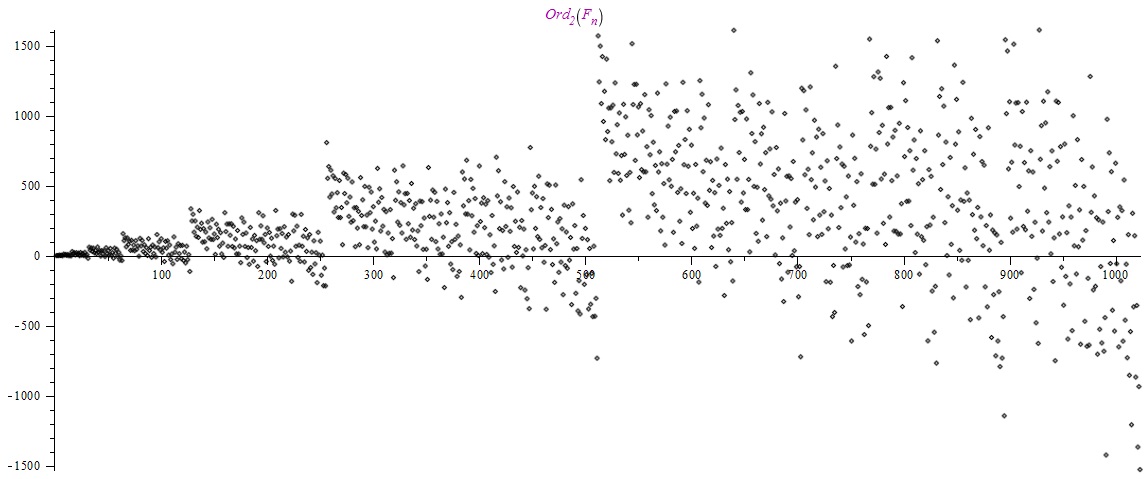}
\caption{Values of $\ord_2(\F_n)$, $1 \le n \le 1023$.}
\label{fig41-ord2}
\end{figure}

We next consider the empirical distribution of the individual values of $\ord_2(\F_n)$.
Figure \ref{fig42-ord2hist} plots  a the rescaled values  $\frac{1}{n} \ord_2(\F_n)$ on
the interval between $2^k \le n < 2^{k+1}$, ordered by size.


\begin{figure}[!htb]\includegraphics[width=130mm]{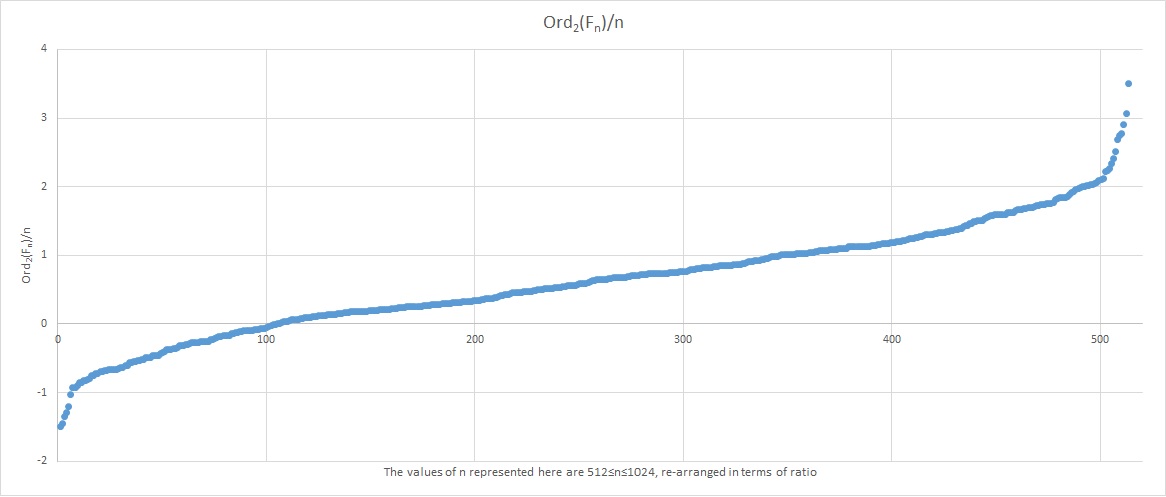}

\caption{Sorted values  of $\frac{1}{n}\ord_2(\F_n)$, $512 \le n \le 1024$.}
\label{fig42-ord2hist}
\end{figure}

This plot looks qualitatively similar to that for $\ord_2(\G_n)$  in Figure \ref{fig32-CLT},
with the change that the median of the distribution is shifted downwards. 
The  median of this empirical distribution is  around $0.7$, suggesting that the
average value of $\ord_2(\F_n)$ is around $0.7 n$ on this range $512 \le n \le 1024$.
In particular the median appears to be  much smaller than $\frac{1}{2} n \log_2 n$ for $\ord_2(\G_n)$.
The data is insufficient to guess at what rate the median of the distribution is
growing: is it growing like $Cn$ or like $C n \log_2 n$?

Finally we study jumps of the function at $n=p^r -1$. 
Empirical data suggests  that $\ord_p(F_{p^r -1})$ may be always non-positive, as shown for $p=2$  in Table 4.1 below.
The last two columns suggest that these values seem to grow like a constant times $n \log_2 n$.
In Appendix A we present additional data  for $p=3$, for $1 \le r \le 10$, where we observe similar behavior occurs. \\
 
%
%

\begin{minipage}{\linewidth}
\begin{center}
\begin{tabular}{|r | r | r | c |c|}
\hline
 \mbox{Power $r$} & $N= 2^r -1$ & $\ord_2(\F_{2^r -1})$  &  $-\frac{1}{N} \ord_2(F_{2^r -1})$  & $-\frac{1}{N \log_2 N} \ord_2(F_{2^r -1})$ \\
\hline
 $1$ &   $1$& $0$ &  $0.0000$  &  $0.0000$\\
$2$ &   $3$ & $0$ & $ 0.0000$  &  $0.0000$\\
$3$ &  $7$ & $-1$  &   $0.1429$  & $0.0509$\\ 
$4$ & $15$ &  $-2$  & $0.1333$   &   $0.0341$\\ 
$5$ &  $31$ & $-19$  & $0.6129$  &  $ 0.0586$\\ 
$6$ & $63$ &   $-35$    & $0.5555$  & $0. 0929$\\
$7$ & $127$ & $-113$ & $0.8898$  & $0.1273$\\
$8$ &  $255$ & $-216$ & $0.8471$  & $0.1095$\\ 
$9$ &  $511$ & $-733$ & $1.4344$  & $ 0.1594$\\ 
$10$ & $1023$ & $-1529$ & $1.4946$ & $0.1495$ \\ 
 $11$ &  $2047$ &  $-3830$ &  $1.8710$ &  $0.1701$\\
$12$ &  $4095$ &  $-7352$ & $ 1.7953$ &  $ 0.1496$ \\
$13$ &  $ 8191$ & $-20348$  &   $2.4842$ &  $0.1910$\\ 
$14$ &  $16383$ & $-41750$  & $2.5484$  & $0.1820$ \\ 
$15$ &  $ 32767$ & $-89956$  & $2.7453$  &  $0.1830$ \\ \hline
\end{tabular} \par
\bigskip
\hskip 0.5in {\rm TABLE 4.1.}  
{\em Values at $N= 2^r -1$ of $\ord_2(\F_N)$.}
\newline
\newline
\end{center}
\end{minipage}

%
%

\subsection{Behavior of $\ord_p(\F_n)$: hypothetical properties}\label{sec43}

The empirical data in Figures \ref{fig41-ord2}  and \ref{fig42-ord2hist} 
together with Table 4.1 suggest that  
 the following  (unproved) hypothetical properties  (P1)-(P4) might conceivably hold for all  the functions $f_p(n) := \ord_p(\F_n)$
The first property concerns the sign of $\ord_p(\F_n)$ at $n = p^k-1$.\medskip


\noindent {\bf Property (P1).}
{\em For a given prime $p$ there holds
$$
f_p({p^k -1}) \le 0 \quad\mbox{ for all}\quad  k \ge 1. 
$$
Furthermore $f_p(p^k -1) < 0$ 
for all $k \ge 2$, with the exception $(p,k)=(2,2).$}.\medskip

The second property concerns the sign of $\ord_p(\F_n)$ at $n=p^k$.\medskip

\noindent {\bf Property (P2).}
{\em For a given prime $p$ one has $f_p({p^k})>0$  for all   all $k \ge 1$.}.\medskip

The third property concerns sign changes of $\ord_p(\F_n)$. \medskip

\noindent{\bf Property (P3).}
{\em For a given prime $p$ the inequalities $f_p(n) >0$ and $f_p(n) <0$ each occur infinitely often.
Each may  hold for a positive proportion of $n$, as $n \to \infty$.}\medskip

The fourth property concerns the absolute magnitude of $|\ord_p(\F_n)|$.\medskip

\noindent {\bf Property  (P4).}
{\em For a given prime $p$ there are are finite positive constants $C_{1, p}, C_{2, p}$ such that, for all $n \ge 1$,}
$$
- C_{1, p} \, n \log_p n \le f_p(n) \le C_{2,p} \, n \log_p n.
$$

We are  far from establishing the validity of any of  Properties (P1)-(P4) for $f_p(n) = \ord_p( \F_n)$.
Because the  fluctuations in M\"{o}bius function sums remain small for $n \le 10000$, the computational
evidence presented is a rather limited test of these properties.
We are not completely  convinced they are true.
Perhaps Property (P1) holds for a given $p$ only for 
 $k$ sufficiently large.  In the next subsection we present 
 limited  theoretical  evidence in their favor.

%
%
%

\subsection{Evidence for  hypothetical properties (P1)-(P4). }\label{sec44}

Properties (P1) and (P2) hold for the $k =1$ case of $\ord_p(\F_n)$. 
We  have verified computationally
 that  Properties (P1), (P2) hold for all primes $p <1000$ when  $k=2$. 
We have verified that Hypotheses (P1), (P2) hold 
for $p=2$ for exponents $1 \le k \le 15$ and that for  $p=3$ for exponents $1\le k \le 10$. 

An interesting  special case  to consider  is   whether
$\ord_p (\F_{p^2 -1}) < 0$ holds for all $p \ge 3$.  Note that this function of $p$ is
complicated because it involves all values $\{ \mu(k) : 1 \le k \le p^2\}.$
To aid in its study, we give  several formulas for this function.

\begin{thm}\label{th46a}
Let  $p \ge 3$ be prime.

(1) One has
\begin{equation}\label{p21}
\ord_p( \F_{p^2 -1}) = \sum_{k=1}^{p-1} \mu(k) \, \ord_p(\G_{ \lfloor \frac{p^2-1}{k} \rfloor})
\end{equation}

(2) For $1 \le k \le p-1$ write
$\lfloor \frac{p^2-1}{k} \rfloor= a_k p + b_k, ~~~ 0 \le a_k, b_k \le p-1,$
then
\begin{equation}\label{closedfm}
\ord_p( \G_{\lfloor \frac{p^2-1}{k} \rfloor})= a_k(p-1- b_k).
\end{equation}
Here $a_k = \lfloor \frac{p-1}{k} \rfloor$
and $b_k = \lfloor \frac{p^2-1}{k} \rfloor - p \lfloor \frac{p-1}{k} \rfloor.$

(3) One has 
\begin{equation}\label{p21-alt}
\ord_p(\F_{p^2 -1}) = (p-1) - \Big(\sum_{k=1}^{p-1} \mu(k) \lfloor \frac{p-1}{k} \rfloor b_k\Big).
\end{equation}
\end{thm}

\begin{rem}
In particular  whenever $k| (p-1)$ one has  $a_k=b_k = \frac{p-1}{k}$; 
these values include  $k=1, 2, \frac{p-1}{2}, p-1$. One has $\ord_p(\G_{p^2-1}) =0$ and $\ord_p( \G_{(p^2-1)/2}) = \frac{(p-1)^2}{4},$
and the $k=2$ term makes a large negative contribution.  This fact is
sufficient to  explain the negativity of $\ord_p(\F_{p^2-1})$ for small primes.
\end{rem}
\begin{proof}
(1) We have $\ord_p(\G_n)=0$ for $1 \le n \le p-1$, and $\lfloor \frac{p^2-1}{j} \rfloor \ne p$
for all integers $j$. The M\"{o}bius inversion formula \eqref{eq41} has all terms vanish for $\ell \ge p$,
which yields \eqref{p21}.

(2) For $1 \le k \le p-1$
we have 
$$
a_k= \lfloor \frac{p^2 -1}{pk} \rfloor = \lfloor \frac{p-1}{k} \rfloor,
$$
whence
$$
b_k = \lfloor \frac{p^2 -1}{k} \rfloor -p\lfloor \frac{p-1}{k} \rfloor
$$

Using Theorem \ref{th39}
we have 
$$
\ord_p(\G_n) = \frac{1}{p-1}( 2\Sum_p(n) - (n-1) d_p(n)).
$$
Substituting $n= a_k p + b_k$ we find that
$$2\Sum_p(n) = (a_k+ b_k)^2 -(a_k + b_k) + a_k (p -1)(a_k+ p-1),$$
while
$$(n-1) d_p(n) = (a_k^2 + a_k b_k- 2a_k) p + (b_k-1)(a_k+b_k)+a_k.$$
A calculation yields \eqref{closedfm}.

(3) We have  the identity, valid for all $n \ge 1$, 
\begin{equation}\label{magic}
\sum_{k=1}^n \mu(k) \lfloor \frac{n}{k} \rfloor = 1,
\end{equation}
It is easily proved by induction on $n \ge 1$.
Substituting the formula of (2) into that of (1) gives
the result.
\end{proof}


\begin{figure}[!htb]
\includegraphics[width=130mm] {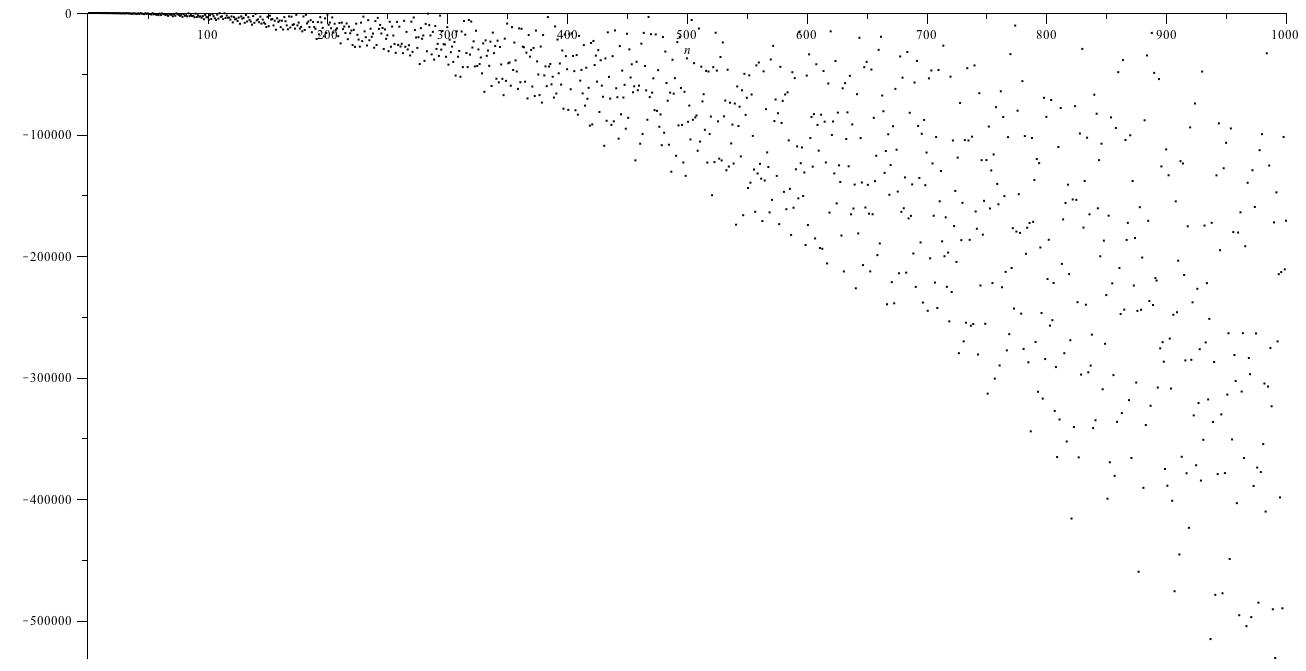}
\caption{Values of $\ord_p(\F_{p^2-1})$ for  prime $1 \le p \le 1000$.}
\label{fig43}
\end{figure}

Figure \ref{fig43}   plots   $\ord_p( F_{p^2 -1})$ for $3 \le p \le 1000$.
 The distribution of these values 
has a lower envelope which appears  empirically\footnote{
The data in  Figure \ref{fig43} seems insufficient to discriminate between growth of order $N$ and of order $N \log_p N$.
For $N=p^2 -1$ the quantity $\log_p N \approx 2$ is approximately constant. }
to be of the form
$-c N \log_p N$ with  $c=0.25$, where  $N= p^2 -1.$ 
  It  has a pronounced scatter of points including some
 values rather close to $0$, but never crossing $0$. 
 The observation suggests that there may be a barrier at $0$,
 and one may ask:  {\em Is there some arithmetic interpretation of the values $\ord_p( F_{p^2 -1})$ that
might justify their negativity, i.e. the truth of Property (P1) for $k=2$?}

As an initial step in the direction of  Property (P3), we show 
 that $\ord_p(\F_n)$ takes positive and negative values at least once,
 for each prime $p$.

\begin{thm}\label{th45}
 For each prime $p$ the function $\ord_p(\F_n)$ takes both
positive and negative values. 
\begin{enumerate}\item
For each $p \ge 2,$  $\ord_p(\F_{p}) >0$ with 
$\ord_p(\F_{p}) = p-1.$
\item
For $p=2$,  $\ord_2(\F_{7}) = -1$.
For odd primes  $p$, 
$$\ord_p(\F_{3p-1}) = -(\frac{p-1}{2}),$$
More generally, for $p \ge 3$, 
\begin{equation}\label{eq51a}
\ord_p(\F_n) <0 \,\,\, \mbox{for} ~~~ \frac{8}{3}p \le n \le 3p-1.
\end{equation}
\end{enumerate}
\end{thm}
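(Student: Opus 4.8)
The plan is to exploit the M\"{o}bius inversion identity \eqref{eq41} for $\log(\F_n)$ applied with $\ord_p(\cdot)$ in place of $\log$, namely \eqref{eq42}, together with the explicit value of $\ord_p(\G_{p^k})$ from Theorem \ref{th29} and the closed form \eqref{closedfm} for $\ord_p(\G_m)$ when $p^2-1$ is replaced by a nearby bound. Part (1) is immediate: when $n=p$, every term $\ord_p(\G_{\lfloor p/\ell\rfloor})$ with $\ell\ge 2$ vanishes because $\lfloor p/\ell\rfloor \le p/2 < p$ forces the argument below $p$, where $\ord_p(\G_m)=0$ by Theorem \ref{th29}; hence $\ord_p(\F_p)=\ord_p(\G_p)=\bigl(1\cdot p - \frac{p-1}{p-1}\bigr)=p-1$ by \eqref{sharp-bd} with $k=1$. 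So only (2) requires work.

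For (2), first dispose of $p=2$ by direct computation: using Theorem \ref{th43pp} or just \eqref{eq42} with $n=7$, the only nonzero contributions come from $\ell=1$ (giving $\ord_2(\G_7)=0$ since $7=2^3-1$) and $\ell=2,3$ (giving $\mu(2)\ord_2(\G_3)$ and $\mu(3)\ord_2(\G_2)$), and one checks $\ord_2(\F_7)=-1$. For odd $p$ and $n$ in the range $\tfrac{8}{3}p \le n \le 3p-1$, I would run the same argument as in Theorem \ref{th46a}: for such $n$ we have $\lfloor n/\ell\rfloor < p$ whenever $\ell \ge 3$ (since $n/3 < p$ when $n < 3p$), and $\lfloor n/\ell\rfloor$ with $\ell=1,2$ still lies in $[p, 3p-1]$, so the $\ell=1$ and $\ell=2$ terms are the only survivors in \eqref{eq42}:
\[
\ord_p(\F_n) = \ord_p(\G_n) - \ord_p(\G_{\lfloor n/2\rfloor}).
\]
Now write $n = a_1 p + b_1$ with $1 \le a_1 \le 2$, $0\le b_1\le p-1$, and $\lfloor n/2\rfloor = a_2 p + b_2$ similarly; apply the closed form \eqref{closedfm}, $\ord_p(\G_{ap+b}) = a(p-1-b)$, to both terms. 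For $\tfrac{8}{3}p \le n \le 3p-1$ one has $a_1 = 2$ and $a_2 = 1$, and a short computation with $b_1, b_2$ (noting $b_2 = \lfloor n/2\rfloor - p$ and $b_1 = n - 2p$) reduces the difference to the claimed value; in particular at $n=3p-1$ one gets $b_1 = p-1$, so $\ord_p(\G_n)=2(p-1-(p-1))=0$, while $\lfloor (3p-1)/2\rfloor = p + \tfrac{p-1}{2}$ gives $b_2 = \tfrac{p-1}{2}$ and $\ord_p(\G_{\lfloor n/2\rfloor}) = 1\cdot(p-1-\tfrac{p-1}{2}) = \tfrac{p-1}{2}$, yielding $\ord_p(\F_{3p-1}) = -\tfrac{p-1}{2}$ as stated. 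For the full range $\tfrac{8}{3}p\le n\le 3p-1$ one checks the sign stays negative by verifying $\ord_p(\G_n) < \ord_p(\G_{\lfloor n/2\rfloor})$ throughout.

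The main obstacle is bookkeeping rather than depth: one must confirm that for every integer $n$ in $[\tfrac{8}{3}p, 3p-1]$ the inequality $2(p-1-b_1) < 1\cdot(p-1-b_2)$ holds, i.e. control the two base-$p$ digits $b_1 = n-2p$ and $b_2 = \lfloor n/2\rfloor - p$ simultaneously. Since $b_1$ ranges over $\lceil \tfrac{2}{3}p\rceil \le b_1 \le p-1$ and correspondingly $b_2 = \lfloor (b_1+2p)/2\rfloor - p = \lfloor b_1/2\rfloor + \lfloor (b_1 \bmod 2)\cdot 0 \rfloor$ — more precisely $b_2 = \lfloor b_1/2\rfloor$ when $b_1$ is even and $b_2 = (b_1+1)/2$ shifted appropriately — the inequality $2(p-1-b_1) < p-1-b_2$ becomes $b_1 > \tfrac{p-1}{2} + \tfrac{b_1 - b_2 \cdot(\text{something})}{\ldots}$, which is comfortably satisfied once $b_1 \ge \tfrac{2}{3}p$; the cutoff $\tfrac{8}{3}p$ is exactly what makes $b_1 \ge \tfrac{2}{3}p$. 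I would present this as a one-line case check on the parity of $n$. The only subtlety to flag is ensuring $\lfloor n/\ell\rfloor \ne p$ for $\ell\ge 2$ so that no stray term of the form $\ord_p(\G_p) = p-1$ sneaks in; since $\lfloor n/2\rfloor \ge \lfloor \tfrac{8}{3}p/2\rfloor = \lfloor \tfrac{4}{3}p\rfloor > p$, this is automatic.
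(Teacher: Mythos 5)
Your proposal is correct in substance, but it takes a genuinely different route from the paper's. The paper proves Theorem~\ref{th45} by counting factors of $p$ directly in the numerator $N_n$ and denominator $D_n$ of the Farey product: for $2p\le n\le 3p-1$ it computes $\ord_p(D_n)=2(p-1)$ from the denominators $p$ and $2p$, and $\ord_p(N_n)=(p-1)+\lfloor\tfrac{3}{2}(n-2p)\rfloor$ from the numerators $p$ and $2p$ as $k$ runs up to $n$, then observes that once $n\ge\tfrac{8}{3}p$ the numerator count reaches $2p-1>\ord_p(D_n)$. You instead route everything through the M\"obius inversion identity~\eqref{eq42} and the closed form $\ord_p(\G_{ap+b})=a(p-1-b)$ extracted from the proof of Theorem~\ref{th46a}(2), reducing the problem to the two-term difference $\ord_p(\G_n)-\ord_p(\G_{\lfloor n/2\rfloor})$ and a digit comparison. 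Both approaches are elementary; the paper's is entirely self-contained (it does not need Theorem~\ref{th39} or the closed form for $\ord_p(\G_n)$), while yours ties the statement more tightly to the $\G_n$-machinery the paper has already built and makes the role of M\"obius inversion and the base-$p$ digits explicit, which is a nice structural observation.

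A few things you should tighten. First, the closed form $\ord_p(\G_{ap+b})=a(p-1-b)$ is stated in the paper (equation~\eqref{closedfm}) only for arguments of the form $\lfloor(p^2-1)/k\rfloor$; you are applying it to general $n<p^2$, which is legitimate because the derivation substitutes $n=ap+b$ into Theorem~\ref{th39} without using the special form, but you should say so explicitly rather than just cite~\eqref{closedfm}. Second, you have a slip in the odd case: $b_2=\lfloor b_1/2\rfloor$ equals $(b_1-1)/2$ when $b_1$ is odd, not $(b_1+1)/2$; this does not affect the conclusion, since the resulting threshold $b_1>\tfrac{2p-3}{3}$ is weaker than the even-case threshold $b_1>\tfrac{2(p-1)}{3}$, both of which are cleared by $b_1\ge\lceil\tfrac{2}{3}p\rceil$. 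Third, the passage around ``$b_1>\tfrac{p-1}{2}+\dots$'' with unfilled placeholders should be replaced by the clean inequality $(p-1)+\lfloor b_1/2\rfloor<2b_1$, i.e.\ $2(p-1)<3b_1$ (even $b_1$) or $2p-3<3b_1$ (odd $b_1$), from which the cutoff $\tfrac{8}{3}p$ falls out immediately. With those repairs the argument is complete and correct.
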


\begin{proof} 
Write
$F_n = \frac{N_n}{D_n}$ where $N_n$ is the product of the numerators of
the positive Farey fractions $\frac{h}{k}$ of order $n$, and $D_n$ is the product of the denominators.
(The  quantities  $N_n$ and $D_n$ will  have a large common factor.) Now the reciprocal Farey product has 
$$
\ord_p(\F_n) = \ord_p(D_n) - \ord_p(N_n).
$$
 Choosing $n=p$, $\ord_p(N_p)=0$
while $\ord_p(D_p) = \varphi ( p) = p-1$. 

To find negative values,  calculation gives  $\ord_2(\F_{7}) =-1$.
Suppose now $ p \ge 3$.
For $2p \le n \le 3p-1$ we have $\ord_p(D_n) = 2(p-1)$,
coming from the denominators $p$ and $2p$. For $p+1\le n \le 2p-1$
the Farey fraction $\frac{p}{n}$ contributes to $\ord_p(N_m)$, for
any $m \ge n$. For 
$2p+1 \le n \le 3p-1$ the fraction $\frac{p}{n}$ similarly contributes one to  $\ord_p(N_m)$, as does $\frac{2p}{n}$ for odd values of $n$ in this interval.
We conclude that for $2p+1 \le n \le 3p-1$,
$$
\ord_p(N_n) = (p-1) + \lfloor \frac{3}{2}(n- 2p) \rfloor
$$
This yields $\ord_p(N_n) \ge 2p -1$ for $n \ge \frac{8}{3}p$,
whence $\ord_p(\F_n) <0$, giving (2).  Finally, choosing $n=3p-1$
we obtain $\ord_p(\F_{3p-1}) = - (\frac{p-1}{2}).$ 
\end{proof}

In the direction of  Property (P4), we have the  weak bound
$$|\ord_p(\F_n)| = O ( n (\log n)^2)$$
 given in Theorem \ref{th44} above. We also have the Omega result
 $$\ord_p (\F_n) = \Omega( n \log_p n),$$
 because  the  individual jumps in the function $\ord_p(\F_n)$  are at least as large as a constant times
 $ n \log_p n$. Indeed, for  $n=p^k$ we have 
$$
\ord_p(\F_{p^k}) - \ord_p( \F_{p^k -1}) = k \varphi(p^k) = k p^{k-1}(p-1) = (1- \frac{1}{p}) n \log_p n.
$$
This calculation implies that 
$$
\limsup_{n \to \infty} \frac{ \ord_p(\F_n)}{n \log_p n} - \liminf_{n \to \infty}  \frac{ \ord_p(\F_n)}{n \log_p n} \ge 1- \frac{1}{p}.
$$
Thus  the assertion of Property (P4), if true, is qualitatively best possible. 


%
%
%

\subsection{When is the reciprocal Farey product $\F_n$ an integer?}\label{sec45}

 This question was originally raised (and solved) in \cite{DL11b}.
Their  solution was obtained using  \eqref{eq51a} in  Theorem \ref{th45}, as follows.
\begin{thm}\label{th46}
Finitely many  reciprocal Farey products $\F_n$ are integers.
The largest such value is $n=58$.
\end{thm}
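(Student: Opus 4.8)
The plan is to show that for all sufficiently large $n$ the reciprocal Farey product $\F_n$ fails to be an integer, by exhibiting a specific prime $p$ with $\ord_p(\F_n) < 0$, and then to handle the remaining finite range by direct computation. The mechanism driving the negativity is exactly the one used in Theorem \ref{th45}: a prime $p$ that is ``just below'' $n$ contributes to the numerator $N_n$ through Farey fractions of the form $\frac{p}{k}$, $\frac{2p}{k}$, etc., but contributes only boundedly often to the denominator $D_n$ (only the denominators $p, 2p, \ldots$ that are $\le n$ matter, and there are just $\lfloor n/p\rfloor$ of them). So if $p$ is chosen in a range like $n/3 < p \le n/2$ (or more carefully $\frac{3}{8}n \le p < \frac{n}{2}$, matching \eqref{eq51a} with the roles rescaled), the numerator wins the race and $\ord_p(\F_n) < 0$, forcing $\F_n \notin \ZZ$.

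First I would make precise the count of $\ord_p(D_n)$ and $\ord_p(N_n)$ for such a $p$. By Lemma \ref{lem-Dn}, if $2p \le n < 3p$ then $\ord_p(D_n) = \varphi(p) + \varphi(2p) = (p-1) + (p-1) = 2(p-1)$ (the $b=1$ terms for $a=1,2$; there is no $p^2$ term since $p^2 > n$ for $p$ in this range once $n$ is large). For the numerator, I would reuse the computation in the proof of Theorem \ref{th45}: the fractions $\frac{p}{k}$ for $p < k \le n$, $\gcd(p,k)=1$ contribute, and $\frac{2p}{k}$ for $2p < k \le n$, $\gcd(2p,k)=1$ contribute, giving $\ord_p(N_n) = (p-1) + \lfloor \frac{3}{2}(n - 2p)\rfloor$ in the relevant subrange $2p < n < 3p$ (adjusting the endpoint bookkeeping as in the cited proof). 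Hence $\ord_p(\F_n) = 2(p-1) - (p-1) - \lfloor\frac{3}{2}(n-2p)\rfloor = (p-1) - \lfloor \frac{3}{2}(n-2p)\rfloor$, which is negative as soon as $n \ge \frac{8}{3}p$, i.e. $p \le \frac{3}{8}n$. Combined with the constraint $p > \frac{n}{2}$... wait, that is inconsistent, so I would instead fix the interval correctly: I want a prime $p$ with $\frac{3}{8}n \le p$ and $3p > n$, i.e. $\frac{n}{3} < p \le \frac{3n}{8}$. By a quantitative form of Bertrand's postulate (e.g. Nagura's theorem: for $x \ge 25$ there is a prime in $(x, \frac{6}{5}x)$, and $\frac{3n/8}{n/3} = \frac{9}{8} < \frac{6}{5}$), such a prime exists for all $n$ beyond a small explicit bound.

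The key steps in order: (i) invoke Nagura/Bertrand to produce a prime $p \in (\frac{n}{3}, \frac{3n}{8}]$ for all $n \ge n_0$ with $n_0$ small and explicit; (ii) for such $p$ verify $p^2 > n$ so there is no $p^2$-term, compute $\ord_p(D_n) = 2(p-1)$ via Lemma \ref{lem-Dn}; (iii) compute $\ord_p(N_n)$ exactly in the range $2p < n < 3p$ as in the proof of Theorem \ref{th45}, obtaining $\ord_p(N_n) = (p-1) + \lfloor\frac{3}{2}(n-2p)\rfloor$; (iv) conclude $\ord_p(\F_n) = (p-1) - \lfloor\frac{3}{2}(n-2p)\rfloor < 0$ because $n \ge \frac{8}{3}p$; hence $\F_n$ is not an integer for $n \ge n_0$; (v) for $n < n_0$, factor each $\F_n$ directly (the data $\F_7 = \frac{3\cdot 5^2 \cdot 7^6}{2}$ already shows non-integrality starts interleaving early) and observe that the last $n$ with $\F_n \in \ZZ$ is $n = 58$. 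The main obstacle is step (iii): the exact numerator count requires careful attention to which Farey fractions $\frac{h}{k}$ with $p \mid h$ are in lowest terms and lie in $[0,1]$, and to the floor-function endpoint effects near $n = 2p$ and $n = 3p$; this is precisely the bookkeeping already carried out in Theorem \ref{th45}, so I would lean on that, but it must be checked that the inequality is strict across the whole interval $(\frac{n}{3}, \frac{3n}{8}]$ and not just at the endpoint. The finite check in step (v) is routine but needs to be stated as having been performed (as in \cite{DL11b}).
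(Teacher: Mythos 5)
Your proposal follows essentially the same route as the paper: invoke the negativity criterion \eqref{eq51a} of Theorem \ref{th45} for a prime $p$ with $\frac{8}{3}p \le n \le 3p-1$ (equivalently $p \in (\tfrac{n}{3}, \tfrac{3}{8}n]$), conclude $\ord_p(\F_n)<0$ so $\F_n \notin \ZZ$ for all large $n$, and dispose of small $n$ by the computation cited in \cite{DL11b}. The one genuine slip is in step (i): Nagura's theorem guarantees a prime in $(x, \tfrac{6}{5}x)$, and since $\tfrac{9}{8} < \tfrac{6}{5}$ your target interval $(\tfrac{n}{3}, \tfrac{3}{8}n]$ is \emph{narrower} than the Nagura interval, so the inequality runs the wrong way — a prime in $(\tfrac{n}{3}, \tfrac{2}{5}n)$ need not land in $(\tfrac{n}{3}, \tfrac{3}{8}n]$. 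You need either a prime-gap result for ratio $\tfrac{9}{8}$ with an explicit threshold, or (as the paper does) the prime number theorem for the qualitative finiteness statement, deferring the explicit bound $n=58$ to the cited solution.
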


\begin{proof}

If $n$ has the property that there exists a prime $p$ satisfying
$$
\frac{1}{3}(n+1) \le p \le \frac{3}{8}n,
$$
then condition \eqref{eq51a} of Theorem \ref{th45} will be satisfied and 
 $\ord_p(\F_n) <0$ certifies that $F_n$ is not an integer. 
The prime number theorem implies that for any $\epsilon >0$ and all sufficiently large $n$
 the interval $(\frac{1}{3}n, \frac{3}{8}n]$
contains at least  $\frac{1}{24} (1- \epsilon) \frac{n}{\log n}$ primes.
In particular  such  a prime will exist for all sufficiently large $n$, whence there are only finitely
many integer $\F_n$.
 
 To obtain the  numerical bound $n=58$ requires the use of prime counting estimates with explicit remainder
 terms, together with computer calculation for small $n$, described in  the solution cited in \cite{DL11b}. 
\end{proof}

%
%
%

\subsection{Reciprocal Farey product $\F_n$ given  in lowest terms}\label{sec45}

Now consider the reciprocal Farey product $\F_n$ as a rational fraction given
in lowest terms, calling it $\F_n= \frac{\hat{D}_n}{\hat{N}_n}$, with
$$
\hat{D}_n := \frac{D_n}{(N_n,D_n)}\quad \mbox{and} \quad \hat{N}_n := \frac{N_n}{(N_n, D_n)}.
$$
We  ask: {\em What are the growth rates of $\hat{D}_n$ and $\hat{N}_n$?}

We have no  answer to this question and about it make the following remarks.
\begin{enumerate}
\item[(i)] It is not  clear whether $\log(\hat{N}_n)$ and $\log(\hat{D}_n)$
separately have  smooth asymptotic behaviors as $n \to \infty$. However   their difference does,
since
$$
\log( \F_n) = \log( \hat{D}_n) - \log( \hat{N}_n) = \frac{3}{\pi^2} n^2+O( n \log n),
$$
as follows using Theorem \ref{th43}9(1),   \eqref{phi-bound}  and the estimate $\psi(n) = O(n)$.
\item[(ii)] The function $\hat{N}_n$ initially grows much 
 more slowly than $\hat{D}_n$.
Theorem \ref{th46} gives $\hat{N}_{58} =1$, while $\hat{D}_{58} > 10^{400}$.
However Theorem \ref{th45}(2) implies a nontrivial asymptotic lower bound for  
growth of $\log(\hat{N}_n)$. It states 
that the product of all primes in the range $\frac{1}{3} n < p < \frac{3}{8} n$ divides $\hat{N}_n$,
which since there are $\gg \frac{n}{\log n}$ prime numbers in this interval implies 
that there is a positive constant $c$ such that 
$\log(\hat{N}_n) \gg  n$
 for  all sufficiently large $n$.

\item[(iii)] We do not know what is the maximal order of growth of $\log(\hat{N}_n)$. Properties (P3) and (P4),
if true,  allow the possibility that it could be close to the same order as the main term. That is,  they suggest the possibility that
there is a positive constant $c$ such that $\log(\hat{N}_n) > c n^2$ infinitely often.  
\end{enumerate}

%
%
%

\section{Farey product archimedean  encoding  of the Riemann hypothesis}\label{sec5}

We have already seen that results of 
 Mikol\'{a}s encode the Riemann hypothesis in terms of $\F_n$ via a formula 
 $$
 \W_{\infty}(\F_n) := \log(\F_n) = \left(\Phi(n) - \frac{1}{2} \psi(n)\right) + R_{\F}(n),
 $$
 which has the arithmetic main term $\Phi(n)-\frac{1}{2}\psi(n)$ on the right side, plus a
 remainder term $R_{\F}(n)$.
 The equivalence to the  Riemann hypothesis is formulated as the remainder term bound $R_{\F}(n)= O( n^{1/2 + \epsilon})$.
The  arithmetic main term  has the feature  that it has oscillations in lower-order terms of its asymptotics which  
 are of size much bigger than the remainder term $R_{\F}(n)$; thus, this arithmetic main term is a complicated object,
 whose behavior is of interest in its own right.

In this section we will show that one can  replace the arithmetic main term of Mikol\'{a}s 
on the right side of his formula with a new arithmetic main term $\Phi_{\infty}(n)$
built entirely out of the quantities $\log(\G_{k})$ associated to unreduced Farey products $\G_k$.
To do this we make use of 
 the M\"{o}bius inversion formula in Theorem \ref{th41}, and the splitting in \eqref{archimedean}
 \footnote{Here $\Phi_{\infty}(n)$
 is the ``replacement main term" mentioned in Sect. \ref{sec12} and defined in  
 \eqref{arch-mainterm} below.}.
 The advantage of our reformulation is that with it one can define formal analogues   for each finite prime $p$.
On the left side, the quantity to approximate,
$\log(\F_n)$,  has an analogue quantity  defined for each prime,  $\ord_p(\F_n)$.
On the right side, the  new arithmetic main term $\Phi_{\infty}(n)$ we introduce  has analogue
quantities  built out
of replacing the quantities $\log(\G_k)$ with $\ord_p(\G_k)$ 
in suitable ways. 
This permits us to attempt reformulations of the Riemann hypothesis at each prime $p$ separately,
as we describe in Section \ref{sec6}.

%
%
%

\subsection{Farey product archimedean arithmetic main term}\label{sec51}

We introduce our new archimedean arithmetic term $\Phi_{\infty}(n)$ at the real place,
and its associated remainder term $R_{\infty}(n)$ defined by

\begin{equation}\label{arch2}
\R_{\infty}(n):= \log (\F_n) -\Phi_{\infty}(n)   
\end{equation}
The   {\em archimedean  arithmetic term} $\Phi_{\infty}(n)$ is given by
\begin{equation}\label{arch-mainterm}
\Phi_{\infty}(n) :=   \sum_{k=1+\Kn}^n \mu(k) \Phi^{*}\Big( \left\lfloor \frac{n}{k} \right\rfloor\Big)+ 
 \sum_{k=1}^{\Kn} \mu(k)\, \log (\G_{\lfloor n/k \rfloor}), 
\end{equation}

\noindent  in which the  function $\Phi^{*}(n) = n(n+1)/2$ counts the number of unreduced Farey products
 of order $n$, and we choose  a cutoff $K_n \approx \sqrt{n}$. 
 By collecting all terms with $\lfloor n/k\rfloor =\ell$ 
we may rewrite the archimedean arithmetic term above in the alternate form 

\begin{equation}\label{arch2a}
\Phi_{\infty}(n) =   \sum_{k=1}^{\Kn}  \mu(k)\, \log (\G_{\lfloor n/k \rfloor}) +
\sum_{\ell =1}^{\Ln}  \Big( M\left(\frac{n}{\ell}\right) - M\left(\frac{n}{\ell+1}\right) \Big)  \frac{\ell(\ell+1)}{2},
\end{equation}

\noindent in which $\Ln \approx \sqrt{n}$ is determined by $\Kn$, and vice versa.
Using \eqref{eq41} and \eqref{archimedean} we can express the remainder term $\R_{\infty}(n)$
as 
\begin{equation}
\R_{\infty}(n) = \sum_{\ell = 1}^{ \Ln}  \left( M\left(\frac{n}{\ell}\right) - M\left( \frac{n}{\ell+1} \right)\right) \Big( \log (\G_{\ell})- \frac{\ell(\ell+1)}{2} \Big).
\end{equation}
 For calculations reported below we chose
\begin{equation}\label{elln}
\Ln = \lfloor \sqrt{n} \rfloor =m,
\end{equation}
in which case we have
\begin{equation}\label{Knn}
\Kn =\begin{cases}
\lfloor \sqrt{n} \rfloor -1 & \quad \mbox{for} \quad m^2 \le n < m(m+1),\\
\lfloor \sqrt{n} \rfloor & \quad \mbox{for} \quad m(m+1) \le n < (m+1)^2.
\end{cases}
\end{equation}
The  definition \eqref{arch-mainterm} of the archimedean arithmetic term  includes an initial sum
that extends over {\em the full range of summation $1 \le k \le n$.} 
This term is the   contribution under M\"{o}bius inversion
of the main term $\frac{1}{2} (\lfloor n/k \rfloor)( \lfloor n/k \rfloor +1)$ in the asymptotic formula for $\G_{\lfloor n/k \rfloor}$.
The second sum in our archimedean main
has  the summation range {\em from $1$ up to about $\sqrt{n}$}. 
It is a ``main term" obtained when using the Dirichlet hyperbola method for splitting sums 
$$
\sum_{k=1}^n F(\frac{n}{k}) g(k) =\sum_{1 \le k \le \Kn}  F(\frac{n}{k}) g(k) + \sum_{\Kn < k \le n}  F(\frac{n}{k}) g(k).
$$
into a ``main term" and ``remainder term", compare
\cite[Lemma 2.1]{Dia82}, \cite[Sect. 3.4]{Lag13}. 
The two terms in the definition of $\Phi_{\infty}(n)$ account for
the two parts of the Mikol\'{a}s arithmetic main term, as explained below. 

A  justification for our definition of $\Phi_{\infty}(n)$ is  the following result.


\begin{thm}\label{th51}
The Riemann hypothesis implies that for fixed $\epsilon >0$ as $n \to \infty$
\begin{equation} \label{51main}
\R_{\infty}(n) = O( n^{\frac{3}{4} + \epsilon}),
\end{equation}
where the implied $O$-constant depends on $\epsilon$.
\end{thm}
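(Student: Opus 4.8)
The plan is to estimate the remainder term
$$
\R_{\infty}(n) = \sum_{\ell=1}^{\Ln} \Big( M(n/\ell) - M(n/(\ell+1)) \Big) \Big( \log(\G_\ell) - \tfrac{1}{2}\ell(\ell+1) \Big)
$$
directly, using the two ingredients the hypothesis and the earlier results make available: the Riemann-hypothesis bound $M(x) = O(x^{1/2+\epsilon})$ from Titchmarsh cited before \eqref{archimedean}, and the asymptotic $\log(\G_\ell) = \tfrac12 \ell^2 - \tfrac12 \ell\log\ell + O(\ell)$ (indeed, with the full expansion from Theorem \ref{th20}, but we only need that $\log(\G_\ell) - \Phi^\ast(\ell) = \log(\G_\ell) - \tfrac12\ell(\ell+1) = -\tfrac12\ell\log\ell + O(\ell)$). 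So the factor $\log(\G_\ell) - \tfrac12\ell(\ell+1)$ is of size $O(\ell\log\ell)$, and on the range $\ell \le \Ln = \lfloor\sqrt n\rfloor$ this is $O(\sqrt n \log n)$.

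First I would handle the sum by Abel summation (summation by parts) in the variable $\ell$, so as to replace the differenced factor $M(n/\ell) - M(n/(\ell+1))$ by a single Mertens term $M(n/\ell)$ multiplied against the difference of the smooth factor. Writing $w(\ell) := \log(\G_\ell) - \tfrac12\ell(\ell+1)$, we have
$$
\R_{\infty}(n) = \sum_{\ell=1}^{\Ln} \big(M(n/\ell) - M(n/(\ell+1))\big) w(\ell) = \sum_{\ell=2}^{\Ln} M(n/\ell)\,\big(w(\ell) - w(\ell-1)\big) + M(n)\,w(1) - M(n/(\Ln+1))\,w(\Ln).
$$
Since $w(1) = \log\G_1 - 1 = -1$ and $w(\ell) - w(\ell-1) = O(\log\ell)$ (each jump in $\log\G_\ell$ is $O(\ell)$ minus the jump $\ell$ in $\tfrac12\ell(\ell+1)$, and the $-\tfrac12\ell\log\ell$ term contributes $O(\log\ell)$ per step), the boundary terms are $O(\sqrt n)$ by $M(x) = O(x^{1/2+\epsilon})$ applied with $x = n$ and $x = n/(\Ln+1) \asymp \sqrt n$. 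For the main sum, insert $|M(n/\ell)| = O((n/\ell)^{1/2+\epsilon})$ and $|w(\ell) - w(\ell-1)| = O(\log\ell)$ to get
$$
\big|\R_{\infty}(n)\big| \ll n^{1/2+\epsilon}\sum_{\ell=2}^{\lfloor\sqrt n\rfloor} \frac{\log\ell}{\ell^{1/2+\epsilon}} + O(\sqrt n) \ll n^{1/2+\epsilon}\cdot (\sqrt n)^{1/2-\epsilon}\log n \ll n^{3/4+\epsilon'},
$$
using $\sum_{\ell\le L}\ell^{-1/2-\epsilon}\log\ell \ll L^{1/2-\epsilon}\log L$ and absorbing the $\log$ into a fresh $\epsilon'$. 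Relabeling $\epsilon'$ as $\epsilon$ gives \eqref{51main}.

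I expect the main obstacle to be bookkeeping rather than anything deep: one must be careful that the decomposition $\log(\F_n) = \Phi_\infty(n) + \R_\infty(n)$ really does reduce to the displayed formula for $\R_\infty(n)$ — this follows by subtracting \eqref{arch2a} from the exact splitting \eqref{archimedean} of $\log(\F_n)$, noting that the first sums in both expressions ($\sum_{k\le\Kn}\mu(k)\log\G_{\lfloor n/k\rfloor}$) cancel identically, so that $\R_\infty(n)$ equals the difference of the two ``$\ell$-sums,'' which is exactly $\sum_\ell (M(n/\ell)-M(n/(\ell+1)))(\log\G_\ell - \tfrac12\ell(\ell+1))$. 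A secondary point requiring care is the crude per-step bound $w(\ell)-w(\ell-1) = O(\log\ell)$: one could instead just bound $\sum_\ell |M(n/\ell)|\,|w(\ell)-w(\ell-1)| \le (\max_\ell |M(n/\ell)|)\sum_\ell|w(\ell)-w(\ell-1)|$ and use that $\sum_{\ell\le L}|w(\ell)-w(\ell-1)|$ telescopes up to $O(L\log L) = O(\sqrt n\log n)$, combined with $\max_{\ell\le\sqrt n}|M(n/\ell)| \ll n^{1/2+\epsilon}$ — this avoids any delicate cancellation and still yields $n^{1/2+\epsilon}\cdot\sqrt n\log n \ll n^{1+\epsilon}$, which is too weak; so the Abel-summation route with the $\ell^{-1/2-\epsilon}$ saving is genuinely needed to reach the $3/4$ exponent. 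The only real input beyond elementary manipulation is thus the Riemann-hypothesis Mertens bound together with the sharp shape of $\log\G_\ell$ from Theorem \ref{th20}.
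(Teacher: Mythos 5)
Your proof is correct, but it takes a genuinely different route from the paper's. You bound $\R_{\infty}(n)$ directly from its explicit formula $\sum_{\ell \le \Ln}\big(M(n/\ell)-M(n/(\ell+1))\big)\big(\log\G_\ell - \tfrac12\ell(\ell+1)\big)$, via Abel summation in $\ell$ together with the RH bound $M(x)=O(x^{1/2+\epsilon})$ and the estimate $w(\ell)-w(\ell-1)=O(\log \ell)$ from Theorem \ref{th20}; the computation $n^{1/2+\epsilon}\sum_{\ell\le\sqrt n}\ell^{-1/2-\epsilon}\log\ell \ll n^{3/4+\epsilon'}$ is sound, and you are right that the $\ell^{-1/2-\epsilon}$ saving (rather than a crude sup bound on $M$) is what reaches the exponent $3/4$. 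The paper instead argues indirectly: it splits $\Phi_\infty(n)=\Phi_{\infty,1}(n)+\Phi_{\infty,2}(n)$, proves unconditionally that $\Phi_{\infty,1}(n)=\Phi(n)$ exactly (Theorem \ref{thA1}, by induction plus the identity \eqref{magic}), shows under RH that $\Phi_{\infty,2}(n)=-\tfrac12\psi(n)+O(n^{3/4+\epsilon})$ (Theorem \ref{thA2}, via RH bounds on $\sum_{k\le\sqrt n}\mu(k)/k$ and $\sum_{k\le\sqrt n}\mu(k)\log k/k$), and then invokes Mikol\'as's Theorem \ref{th43}. Your argument is shorter and uses RH only through the Mertens function, but the paper's detour buys the extra information that $\Phi_\infty(n)$ agrees with the Mikol\'as main term $\Phi(n)-\tfrac12\psi(n)$ to within $O(n^{3/4+\epsilon})$, which is precisely the ``justification'' of the replacement main term emphasized in Section \ref{sec51}; your proof does not establish that comparison. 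One small slip: the second boundary term $M(n/(\Ln+1))\,w(\Ln)$ is not $O(\sqrt n)$ but of size up to $O(n^{1/4+\epsilon}\cdot\sqrt n\log n)=O(n^{3/4+\epsilon'})$, since $w(\Ln)\asymp \sqrt n\log n$; this is still within the target bound, so the conclusion is unaffected, but the stated estimate should be corrected.
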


We defer the proof of Theorem \ref{th51}  to Section \ref{sec53}.
The proof shows  that the initial sum  on the
right side of \eqref{arch-mainterm} is unconditionally of size $\Phi(x) + O(1)$ and  shows
 that  the second sum on the right side of \eqref{arch-mainterm}
 is, conditional on the Riemann Hypothesis,  of size  $-\frac{1}{2} \psi(x) + O ( x^{\frac{3}{4} + \epsilon})$.

Based on Theorem \ref{th51} we  propose: \medskip

\noindent {\bf Hypothesis $\R_{\infty}$.} {\em For each $\epsilon >0$ there holds, as $n \to \infty$,
\begin{equation} 
\R_{\infty}(n) = O( n^{\frac{3}{4} + \epsilon}),
\end{equation}
where the implied $O$-constant depends on $\epsilon$.}

Theorem \ref{th51}  seems weaker in appearance than  the result of Mikol\'{a}s in having a remainder term bounded by $O(x^{3/4+ \epsilon})$
rather than $O( x^{1/2 +\epsilon}),$ so it may seem that Hypothesis $\R_{\infty}$ might be weaker
than the Riemann hypothesis. 
 Subsequent work   of the first author with R. C.Vaughan  will show   that the
converse of Theorem \ref{th51} holds, and  that Hypothesis $\R_{\infty}$ is actually equivalent to
the Riemann hypothesis,
In addition it will show  the true magnitude of the error term is $\Omega(x^{\frac{3}{4}- \epsilon})$.



%
%
%

\subsection{Remainder term $R_{\infty}(n)$: experimental data} \label{sec52}
Figure \ref{fig51-R-inf} presents empirical data on $\R_{\infty}(n)$. The function is bounded by
$n^{3/4}$ over the given range, and
its graph  has a striking appearance exhibiting  definite internal structure. 


\begin{figure}[!htb]
\includegraphics[width=125mm]{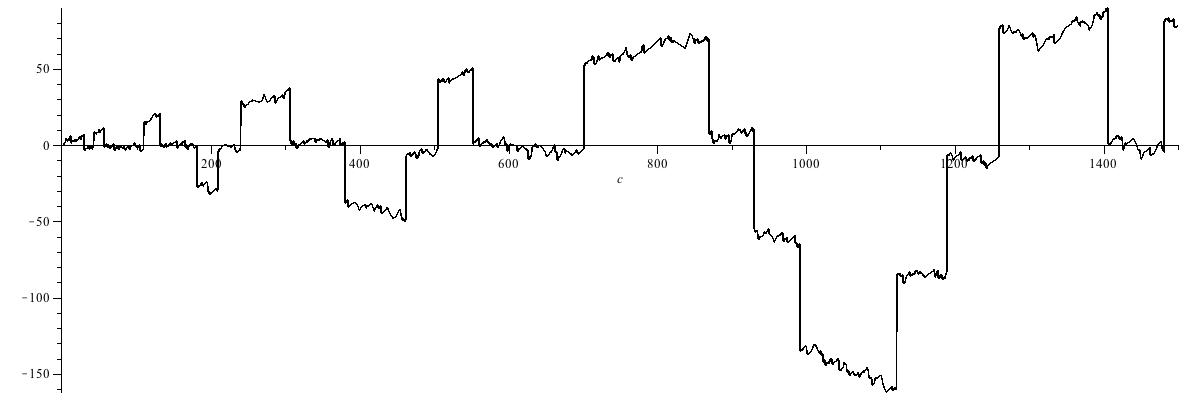}
\caption{$\R_{\infty}(n)$, $1 \le n \le 1500$.}
\label{fig51-R-inf}
\end{figure}

The graph exhibits occasional  large jumps of varying sign followed by slow variation of the function. 
It was noted by J. Arias de Reyna  
that the location of these  jumps of the function visible in the graph in Theorem \ref{fig51-R-inf} 
are at a subset of the points $n=m(m+1)$.
Subsequent work related these jumps to the hyperbola method splitting  of
the ``main term" and ``remainder term". 
They  occur only at values  $m$ is squarefree,
and the direction of each jump is that of   $\mu(m)$.


%
%
%

\subsection{ Proof of Theorem \ref{th51}} \label{sec53}

We partition the archimedean arithmetic term $\Phi_{\infty}(n)$ as 
$$
\Phi_{\infty}(n) = \Phi_{\infty, 1}(n) + \Phi_{\infty, 2}(n),
$$
with   initial sum $\Phi_{\infty, 1}(n)$ defined by
\begin{equation} \label{601}
\Phi_{\infty, 1}(n)\, :=\quad  \sum_{k=1}^n \mu(k) \Phi^{*}\Big( \left\lfloor \frac{n}{k} \right\rfloor\Big) = \frac{1}{2} \sum_{k=1}^n \mu(k) \left\lfloor \frac{n}{k} \right\rfloor \Big( \left\lfloor \frac{n}{k} \right\rfloor +1\Big)
\end{equation}
and the second sum $\Phi_{\infty, 2}(n)$ defined by
\begin{equation} \label{601a}
\Phi_{\infty, 2}(n)\, := \quad \sum_{k=1}^{\Kn} \mu(k)\, \Big(\log (\G_{\lfloor n/k \rfloor})-\Phi^{*}\Big( \left\lfloor \frac{n}{k} \right\rfloor\Big) \Big), \quad\quad
\end{equation}
with $\Kn$ given by \eqref{Knn}.
We first derive an unconditional formula for the initial sum $\Phi_{\infty, 1}(n)$.

\begin{thm}\label{thA1}
Set  $\Phi(n) = \sum_{k=1}^n \varphi(k)$. Then one  has 
\begin{equation}\label{603}
\Phi(n) = \frac{1}{2} \sum_{k=1}^n \mu(k) \lfloor \frac{n}{k} \rfloor ( \lfloor \frac{n}{k}\rfloor +1),
\end{equation}
so that $\Phi(n) = \Phi_{\infty, 1}(n)$.
\end{thm}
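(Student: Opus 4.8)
The plan is to establish the identity \eqref{603} by the standard Möbius/Euler-totient argument applied to a convenient identity for $\varphi$. First I would recall the elementary fact that $\varphi = \mu * \mathrm{id}$, i.e.\ $\varphi(m) = \sum_{d \mid m} \mu(d)\, \frac{m}{d}$, which is itself Möbius inversion of $\sum_{d\mid m}\varphi(d) = m$. Summing over $m$ from $1$ to $n$ and interchanging the order of summation (writing $m = dk$) gives
\[
\Phi(n) = \sum_{m=1}^n \varphi(m) = \sum_{m=1}^n \sum_{d \mid m} \mu(d) \frac{m}{d} = \sum_{d=1}^n \mu(d) \sum_{k=1}^{\lfloor n/d \rfloor} k = \sum_{d=1}^n \mu(d) \binom{\lfloor n/d\rfloor + 1}{2},
\]
which is exactly $\frac{1}{2}\sum_{d=1}^n \mu(d) \lfloor n/d\rfloor(\lfloor n/d\rfloor+1)$. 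This is \eqref{603}, and since the right-hand side is visibly the definition of $\Phi_{\infty,1}(n)$ from \eqref{601} (note $\Phi^{*}(m) = m(m+1)/2$), the final clause $\Phi(n) = \Phi_{\infty,1}(n)$ follows immediately.

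An alternative, perhaps more self-contained route that avoids quoting $\varphi = \mu * \mathrm{id}$ is to argue directly: $\Phi(n) = |\sF_n^{\ast}|$ counts pairs $(h,k)$ with $1 \le h \le k \le n$, $\gcd(h,k)=1$, while $\Phi^{*}(\lfloor n/d\rfloor)$ counts pairs $(h',k')$ with $1 \le h' \le k' \le \lfloor n/d \rfloor$ (no coprimality), and the substitution $h = d h'$, $k = d k'$ with $d = \gcd(h,k)$ sets up a bijection between $\sG_n^{\ast}$ and the disjoint union over $d$ of the scaled coprime sets; reading off cardinalities gives $\Phi^{*}(n) = \sum_{d=1}^n \Phi(\lfloor n/d\rfloor)$, which by the variant Möbius inversion formula of \cite[Sec.~I.2, Theorem 9]{Ten95} — the same one invoked in the proof of Theorem \ref{th41} — inverts to \eqref{603}. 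Either derivation is short; I would present the first as the main line since the interchange of summation is cleaner than tracking the bijection.

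The ``hard part'' here is essentially nil: this is a routine lemma whose only subtlety is bookkeeping with the floor function, which is handled automatically by the substitution $m = dk$ since $k$ ranges exactly over $1 \le k \le \lfloor n/d\rfloor$. The one point worth a sentence of care is confirming that the sum $\sum_{k=1}^{\lfloor n/d\rfloor} k$ equals $\Phi^{*}(\lfloor n/d\rfloor) = \frac{1}{2}\lfloor n/d\rfloor(\lfloor n/d\rfloor+1)$, matching the summand in \eqref{601} term by term, so that the equality $\Phi(n) = \Phi_{\infty,1}(n)$ is literal and not merely asymptotic. I would close by noting this gives the unconditional evaluation of the first of the two pieces of $\Phi_{\infty}(n)$ promised in the discussion after Theorem \ref{th51}, leaving only the conditional analysis of $\Phi_{\infty,2}(n)$ for the remainder of Section \ref{sec52}.
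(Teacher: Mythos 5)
Your proof is correct, but it takes a genuinely different route from the paper's. You derive \eqref{603} directly from the convolution identity $\varphi(m)=\sum_{d\mid m}\mu(d)\,\frac{m}{d}$ by summing over $m\le n$ and interchanging the order of summation via $m=dk$, which immediately produces $\sum_{d\le n}\mu(d)\sum_{k\le \lfloor n/d\rfloor}k$; your second sketch (counting $\sG_n^{\ast}$ by the gcd and inverting $\Phi^{\ast}(n)=\sum_{d\le n}\Phi(\lfloor n/d\rfloor)$) is likewise valid and is exactly the multiplicative argument of Theorem \ref{th41} transported to cardinalities. The paper instead first proves the auxiliary identity \eqref{602}, namely $\Phi(n)=\frac{1}{2}\sum_{k\le n}\mu(k)\lfloor n/k\rfloor^{2}+\frac{1}{2}$, by induction on $n$ (the increment $S(n+1)-S(n)$ collapses to $\sum_{d\mid n+1}\mu(d)\frac{n+1}{d}=\varphi(n+1)$ since $\lfloor (n+1)/k\rfloor$ changes only when $k\mid n+1$), and then passes from the square to the product $\lfloor n/k\rfloor(\lfloor n/k\rfloor+1)$ using the classical identity $\sum_{k\le n}\mu(k)\lfloor n/k\rfloor=1$ (equation \eqref{magic}). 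Your derivation is shorter and more transparent; the paper's detour has the side benefit of isolating \eqref{602} as a standalone statement, which is reused in Remark \ref{rem62} to locate the source of the oscillations in $E(n)$. If you adopt your version, you would want to record \eqref{602} separately (it follows from \eqref{603} by subtracting $\frac{1}{2}\times$\eqref{magic}) so that the later remark still has something to cite.
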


\begin{proof}
We first show that 
\begin{equation}\label{602}
\Phi(n) =\frac{1}{2} \Big( \sum_{k=1}^n \mu(k) (\lfloor \frac{n}{k} \rfloor)^2  \Big)     +\frac{1}{2}.
\end{equation}
This equality is proved by induction on $n$; call its right side $S(n)$.
The extra term $\frac{1}{2}$ on the right side  is needed to establish  the base case $n=1$.
For the induction step, 
suppose  $S(n)= \Phi(n)$ for a given $n$. 
Since $\lfloor \frac{n+1}{k} \rfloor = \lfloor \frac{n}{k} \rfloor$ unless $k \mid (n+1)$, we have 
\begin{eqnarray*}
S(n+1)- S(n)  &=& \frac{1}{2} \sum_{d | (n+1)} \mu(d) \Big( \lfloor \frac{n+1}{d} \rfloor^2 - \lfloor \frac{n}{d} \rfloor^2\Big)\\
& = & \frac{1}{2} \sum_{d | (n+1)} \mu(d) \Big( 2 (\frac{n+1}{d}) -1 \Big)\\
& = & \sum_{d |  (n+1)} \mu(d) \frac{n+1}{d}  - \frac{1}{2} \left(\sum_{d|(n+1)} \mu(d)\right).\\
&=& \varphi(n+1),
\end{eqnarray*}
This shows $S(n+1) = \Phi(n+1),$ completing the induction step, proving \eqref{602}.

To establish \eqref{603}, comparing  the definition 
\eqref{601}
 of $\Phi_{\infty, 1}(n)$ with the right 
side of \eqref{602}, we obtain, 
$$
\Phi_{\infty, 1}(n)= \Phi(n)-\frac{1}{2}+\frac{1}{2}\sum_{k=1}^n \mu(k) \lfloor \frac{n}{k} \rfloor=\Phi(n) 
$$
where the last equality used \eqref{magic}.
\end{proof} 

\begin{rem}\label{rem62}
Combining \eqref{602}   with the known asymptotic for $\Phi(n)$  yields
$$
\frac{1}{2}\sum_{k=1}^n  \mu(k)(\lfloor\frac{n}{k}\rfloor)^2  = \frac{3} {\pi^2} n^2 + O ( n \log n).
$$
Here the  remainder term
$E(n) = \Phi(n)- \frac{3}{\pi^2} n^2$  is known to have large oscillations of magnitude 
at least $\Omega (n \sqrt{\log\log n})$ (see Section  \ref{sec32}).
One  can consider  a similar sum which does not apply  the fractional part  function, 
and obtain a similar unconditional estimate 
\begin{equation}\label{none}
\frac{1}{2} \Big( \sum_{k=1}^n \mu(k) ( \frac{n}{k})^2 \Big) = \frac{3} {\pi^2} n^2 + O ( n).
\end{equation}
Under the assumption of the Riemann hypothesis, one can establish  a much smaller error term
$$
\frac{1}{2} \Big( \sum_{k=1}^n \mu(k) ( \frac{n}{k})^2 \Big) = \frac{3} {\pi^2} n^2 + O ( n^{\frac{1}{2}+ \epsilon}).
$$
Comparing the right side of \eqref{602}  with  \eqref{none} reveals that 
the oscillations in the remainder term $E(n)$ are coming  from the application of the floor function
in  the sum \eqref{602}.
\end{rem}

We next derive  estimates for $\Phi_{\infty, 2}(n)$.

\begin{thm}\label{thA2}
(1)  There holds unconditionally 
\begin{equation}\label{uncond1}
\Phi_{\infty, 2}(n) = \frac{n}{2}  \sum_{1 \le k \le  \sqrt{n}} \frac{\mu(k)}{k} \log k -
\frac{n}{2}\log \left(\frac{2\pi n}{e}\right)\sum_{1 \le k \le \sqrt{n} } \frac{\mu(k)}{k} 
+ O \left(\sqrt{n} \log n \right).
\end{equation}

 (2) Assuming the   Riemann hypothesis,  for each $\epsilon >0$ there holds 
\begin{equation}\label{condRH1}
\Phi_{\infty, 2}(n) = -\frac{1}{2} \psi(n)  + O \big( n^{3/4 + \epsilon} \big).
\end{equation}
where $\psi(x) := \sum_{n \le x} \Lambda(n)$.
\end{thm}

\begin{proof}
(1) To prove (1) from Theorem \ref{th20} we have 
$$
\log (\G_{ \lfloor n/k\rfloor}) = \Phi^{\ast}\left( \left\lfloor \frac{n}{k}\right\rfloor\right)
 - \frac{1}{2}\left\lfloor \frac{n}{k} \right\rfloor \log \left(\left\lfloor \frac{n}{k} \right\rfloor\right)
+\Big(\frac{1}{2}- \log(\sqrt{2 \pi}) \Big) \left\lfloor \frac{n}{k} \right\rfloor          + O \Big(  \log\Big(\left\lfloor \frac{n}{k} \right\rfloor \Big) \Big).
$$
We write $\lfloor n/k\rfloor = n/k - \{ n/k\}$ and obtain that 
\[\log (\G_{ \lfloor n/k\rfloor})- \Phi^{\ast}\left( \left\lfloor \frac{n}{k}\right\rfloor\right) \] 

can be written as 

\[- \frac{1}{2} \left(\frac{n}{k}- \Big\{ \frac{n}{k} \Big\}\right) \left(\log  \frac{n}{k}
 + \log \left(1 - \frac{k}{n} \Big\{ \frac{n}{k} \Big\}\right)+2\log\left(\frac{2\pi}{e}\right)\right)
 + O \left(  \log\left(\frac{n}{k} \right)\right).
\]

Using the estimate $\log(1 - \frac{k}{n} \{ \frac{n}{k} \}) = O ( \frac{k}{n})$ valid for $1 \le k \le \lfloor \sqrt{n} \rfloor$, 
and noting that in all cases $\lfloor \sqrt{n} \rfloor -1 \le \Kn \le \lfloor \sqrt{n} \rfloor$, we obtain
unconditionally 

\begin{equation}\label{eq-A2}
\Phi_{\infty, 2}(n) = - \frac{n}{2} \sum_{1 \le k \le \sqrt{n}} \frac{\mu(k)}{k} \log \frac{n}{k} 
- \log\left(\sqrt{\frac{2 \pi}{e}}\right) \sum_{1 \le k \le \sqrt{n}}  \mu(k) \,\frac{n}{k} 
+ O \Big( \sqrt{n} \log n \Big).
\end{equation}

Using $\log \frac{n}{k} = \log n - \log k$ in the first term, simplifying and collecting terms
yields \eqref{uncond1}.

(2) To prove (2), first, assuming the Riemann hypothesis, we have the estimate
\begin{equation}\label{eq-A22}
\sum_{ 1 \le k \le \sqrt{n} }  \frac{\mu(k)}{k}= O( n^{-1/4 + \epsilon}).
\end{equation}
To show this, we start from  the conditionally convergent sum  
$$\sum_{k=1}^{\infty} \frac{\mu(k)}{k} =0,$$
 a statement known to be equivalent to the Prime Number Theorem.
We then have 
$$\sum_{k=1}^N\frac{\mu(k)}{k} = - \sum_{k=N+1}^{\infty} \frac{ \mu(k)}{k}.$$
By partial summation, assuming RH, we obtain
\begin{eqnarray*}
\sum_{k=N+1}^{\infty}  \frac{\mu(k)}{k} &=& \sum_{k=N+1}^{\infty} ( M(k) - M(k-1) ) \frac{1}{k}\\
&=&  \frac{M(N)}{N+1} +  \sum_{k=N+1}^{\infty}  M(k) \big( \frac{1}{k} - \frac{1}{k+1} \big) \\
&=&  O \Big( N^{-1/2 +\epsilon} + \sum_{k=N+1}^{\infty} \frac { \, k^{1/2+\epsilon}}{ k(k+1)} \Big) = O \Big( N^{-1/2 +\epsilon}\Big).
\end{eqnarray*}
Choosing  $N=\sqrt{n}$ yields \eqref{eq-A22}.

Second, assuming the Riemann hypothesis, we have the estimate
\begin{equation}\label{eq-A23}
\sum_{1 \le k \le \sqrt{n}} \frac{\mu(k)}{k} \log k = -1+ O (n^{-\frac{1}{4} + \epsilon} \log n).
\end{equation}
To show that,   we start from  the conditionally convergent sum
$$
\sum_{k=1}^{\infty} \frac{\mu(k)}{k} \log k =\frac{d}{ds}( \frac{1}{\zeta(s)})|_{s=1} =  -1,
$$
again a result at the depth of the Prime Number Theorem. The result \eqref{eq-A23}
is proved by a similar partial summation argument to the above.

The estimate \eqref{eq-A22} allows us to  bound the second sum on the right in \eqref{uncond1} by $O( n^{3/4+\epsilon})$.
The estimate \eqref{eq-A23} allows us to estimate the first sum on the right in \eqref{uncond1} by $-\frac{1}{2}n + O( n^{3/4+\epsilon})$.
In consequence, the RH yields
$$
\Phi_{\infty, 2}(n) = -\frac{1}{2} n + O (n^{3/4+ \epsilon}).
$$

Third, the Riemann hypothesis  is well known to be equivalent to the assertion
$$
\psi(n) = n + O \Big( n^{1/2} (\log n)^2 \Big).
$$
This fact proves (2).
\end{proof}

\begin{proof}[Proof of Theorem \ref{th51}.]
We assume that the Riemann hypothesis holds.
On combining Theorem \ref{thA1} with Theorem  \ref{thA2} (2),
we  obtain
$$
\Phi_{\infty}(n) = \Phi_{\infty, 1}(n) + \Phi_{\infty, 2}(n) = \Phi(n) - \frac{1}{2} \psi(n) + O( n^{\frac{3}{4}+\epsilon}).
$$
Combining this estimate with Mikolas's  Theorem \ref{th43}
gives the estimate
$$
\log(\F_n) = \Phi_{\infty}(n) + O(n^{3/4+\epsilon}),
$$ 
as desired.
\end{proof}

%
%
%

\section{Is there an $\ord_p(\F_n)$ analogue of the Riemann hypothesis?}\label{sec6}

The problem of determining the behavior of the
functions $\ord_p(\F_n)$ for a fixed prime $p$ may
be a difficult one, because the analogous problem at the real place encodes the
Riemann hypothesis,  in the form Theorem \ref{th43} (2).  
One may ask more: {\em Is it possible to  encode the Riemann hypothesis itself at
a single prime $p$, in terms of  the
behavior of $\W_p(\F_n) =\ord_p(\F_n)$ as $n \to \infty$?}

In Section \ref{sec5} we reformulated the Riemann hypothesis entirely in terms of 
the sizes $\log (\F_n)$ and $\log(\G_n)$ of Farey products
and unreduced Farey products, respectively.  
The advantage of this reformulation is that has formal analogues  defined for each finite prime $p$.
On the left side, the quantity to approximate,
$\log(\F_n)$,  has an analogue quantity  defined for each prime,  $\ord_p(\F_n)$.
On the right side, the  new arithmetic main term $\Phi_{\infty}(n)$ we introduced  has analogue
quantities  built out
of replacing the quantities $\log(\G_k)$ with $\ord_p(\G_k)$ 
in suitable ways. 

The new arithmetic main terms that we introduce this way are necessarily  arithmetic functions
exhibiting oscillations, because $\ord_p(\F_n)$ exhibits oscillations and sign changes. 
These terms contain new  kinds of arithmetic information which may be of  interest in their own right,
encoded as new sorts of arithmetic sums mixing the M\"{o}bius function with base $p$ radix expansion data.
We  will see there is more than one 
possible choice to consider for these ``main terms" for a finite prime $p$. 
With each choice we have an associated remainder term, and we  study these remainder terms  experimentally.

In parallel with the archimedean case we expect the Riemann hypothesis to manifest itself in bounds on 
the size of remainder terms. 
We  present below  computational results  that
suggest such a formulation may be possible.

%
%
%

\subsection{Arithmetic main terms and remainder terms for finite primes $p$ }\label{sec61}

We now formulate ``arithmetic main terms" for  $\ord_p(\F_n)$.
For each prime $p$  we can define by analogy a decomposition
\begin{equation}\label{non-arch2}
\W_p(\F_n) 
 = \Phi_{p}(n) + \R_{p}(n).
\end{equation}
by making a suitable choice of a  $p$-adic arithmetic term. It is not  clear a priori whether
there should be included an analogue of the first term on the right side of \eqref{arch-mainterm} or not. 
We therefore experimentally investigate  three plausible choices for the arithmetic term, denoting them $ \Phi_{p,j}(n)$ for  $0 \le j \le 2$, 
in which  we may or may not choose to include a correction term of quantities summed over the whole interval
$1 \le k \le n$. We recall  the formula 
$$\ord_p(\G_n) = \frac{2}{p+1}S_p(n) - \frac{n-1}{p+1} d_p(n)$$
 given
in Theorem \ref{th39}, which splits $\ord_p(\G_n)$ into a smooth term and an oscillatory term, respectively.
We consider the options whether to remove
none or one of the two
sums on the right side {\em over the whole interval $1 \le k \le n$.}

The three  options are  first,  to have no correction term,  
\begin{equation}\label{p-mainterm31}
 \Phi_{p,0}(n) := 
\sum_{k=1}^{\Kn} {} \mu(k)\, 
\Big( \ord_p (\G_{\lfloor n/k \rfloor})    \Big),
\end{equation}
or second, to add a correction term that removes the contribution of the $d_p(n)$,
\begin{equation}\label{p-mainterm3}
\Phi_{p,1}(n) :=  \Phi_{p,0}(n)
-\frac{n-1}{p-1}\left(\sum_{k=1+\Kn}^n \mu(k) d_p\left(\Big\lfloor \frac{n}{k} \Big\rfloor \right)\right),
\end{equation}
or  third, to have a correction term that removes  the contribution of the $S_p(n)$, 
\begin{equation}\label{p-mainterm3}
\Phi_{p,2}(n) :=  \Phi_{p,0}(n) - \frac{2}{p-1}\Big(\sum_{k=\Kn+1}^n \mu(k) S_p(\lfloor \frac{n}{k} \rfloor)\Big).
\end{equation}
In each case  the remainder term $\R_{p,j}(n)$ is defined by \eqref{non-arch2}
for $j=0, 1, 2$ with the $K_n$ is as defined in \eqref{Knn}.
The remainder terms   $\R_{p,j}(n)$in the three cases  are explicitly given  by
\begin{equation*}\label{p-remterm31}
\R_{p, 0}(n) \quad  = \quad \sum_{\ell = 1}^{ \Ln} \Big( M\left(\frac{n}{\ell}\right) - M(  n/(\ell+1) \Big)\, 
\big( \ord_p (G_{\ell})  \big), \quad\quad\quad\quad
\end{equation*}
\begin{eqnarray*}\label{p-remterm33}
\R_{p,1}(n)  &= &\sum_{\ell = 1}^{\Ln} \Big( M\left(\frac{n}{\ell}\right) - M\Big(  \frac{n}{\ell+1}\Big)\Big)\, 
\big( \ord_p (G_{\ell}) + \frac{\ell-1}{p-1} d_p(\ell) \big) \nonumber \\
& =  & \sum_{\ell = 1}^{ \Ln} \Big( M\left(\frac{n}{\ell}\right) -M\Big(  \frac{n}{\ell+1}\Big) \Big)\, 
\Big( \frac{2}{p-1} S_p(\ell) \Big),
\end{eqnarray*}
\begin{eqnarray*}\label{p-remterm32}
\R_{p,2}(n) &= &\sum_{\ell = 1}^{ \Ln}  \Big( M\left(\frac{n}{\ell}\right) - M\Big(  \frac{n}{\ell+1}\Big)\Big)\, 
\big( \ord_p (G_{\ell}) - \frac{2}{p-1} S_p(\ell) \big)     \nonumber \\
& = & - \sum_{\ell = 1}^{ \Ln}  \Big( M\left(\frac{n}{\ell}\right) - M\Big(  \frac{n}{\ell+1}\Big)\, 
\Big( \frac{\ell-1}{p-1} d_p(\ell) \Big).
\end{eqnarray*}

With  these definitions we have the  identity
\begin{equation}\label{identity1}
\R_{p,0}(n) = \R_{p, 1}(n) + \R_{p, 2}(n).
\end{equation}
For our calculations we  choose $\Ln = \lfloor \sqrt{n} \rfloor$ as above.

The formulas for $\Phi_{p, j}(n)$ embody arithmetic sums of  new types,
which involve  M\"{o}bius function values multiplied against base $p$ radix
expansion data of $k$ with $1 \le k \le n$.

%
%
%

\subsection{Remainder terms for $p=3$: experimental data}\label{sec62}
The following figures give data for $p=3$ for these three choices of remainder terms $\R_{p,j}(n).$


\begin{figure}[!htb]
\includegraphics[width=125mm]{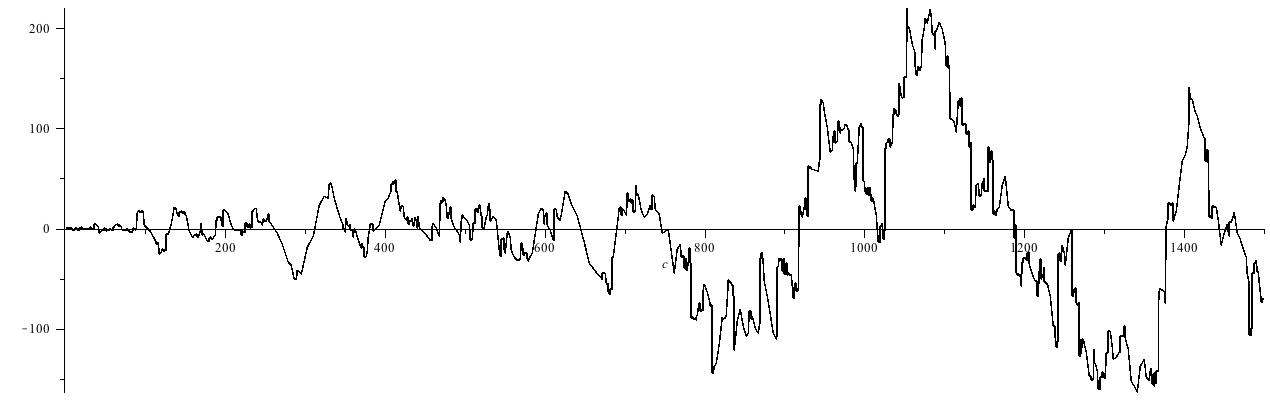}
\caption{$\R_{3,0}(n)$, $1 \le n \le 1500$.}
\label{fig52-R3}
\end{figure}



\begin{figure}[!htb]
\includegraphics[width=125mm]{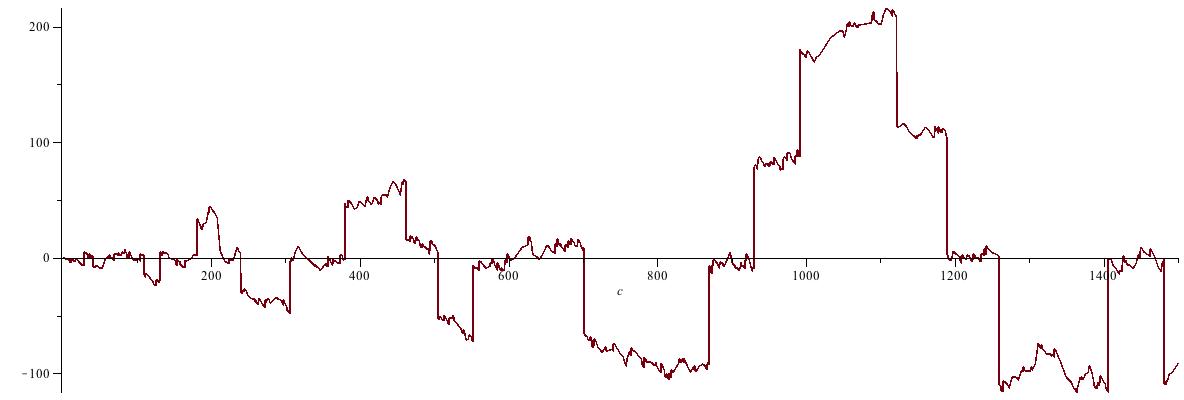}
\caption{$\R_{3,1}(n)$, $1 \le n \le 1500$.}
\label{fig53-R3sum2}
\end{figure}


\begin{figure}[!htb]
\includegraphics[width=125mm]{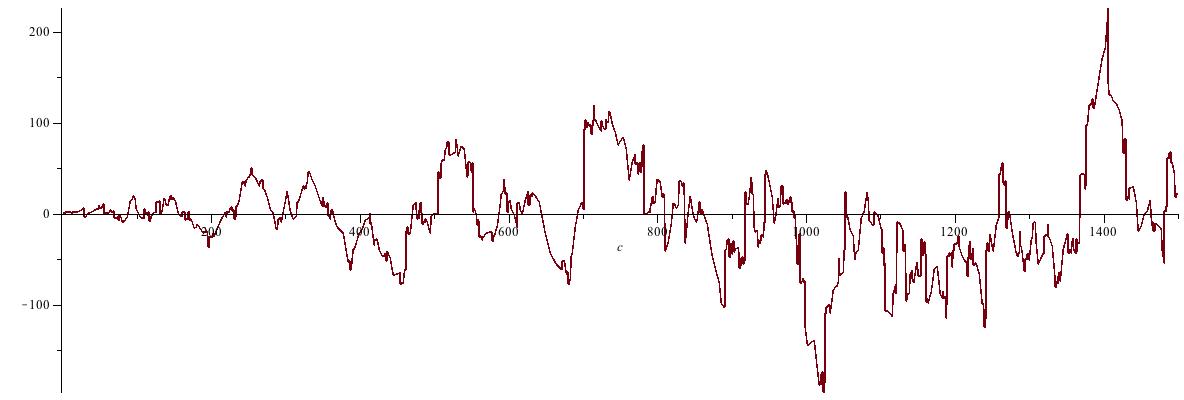}
\caption{$\R_{3,2}(n)$, $1 \le n \le 1500$.}
\label{fig54-sum3}
\end{figure}

In these plots all three remainder terms seem roughly the same size; this size however
is slightly larger in magnitude than that seen for $\R_{\infty}(n)$. The identity \eqref{identity1} implies that 
either all three sums are of the same order of magnitude, or else one sum is significantly smaller than the other
two.  


We observe the surprising feature  that  the graph of $\R_{3, 1}(n)$ in Figure \ref{fig53-R3sum2}  
(more precisely  of its negative $-\R_{3, 1}(n)$) has a striking qualitative resemblance to the remainder term $\R_{\infty}(n)$.
It has  large abrupt jumps and some relatively flat spots, with jumps at  exactly the same
points as for $\R_{\infty}(n)$; the jumps appear to be  larger than that of $R_{\infty}(n)$ by a factor of roughly $\frac{5}{3}$.
We  found that similar  qualitative behavior occurs for $-\R_2(n)$ and $-\R_5(n)$ over the same range,
 with identical jump locations and multiplicative scaling factors of jump sizes roughly $3$ and $\frac{5}{4}$, respectively.

%
%
%

\subsection{Remainder term growth rates: hypotheses}\label{sec63}
On the strength of the empirical  observations above , we formulate for consideration
the following hypotheses.\medskip

\noindent {\bf Hypothesis $\R_{p,1}$.} {\em For each fixed $\epsilon >0$ there holds, as $n \to \infty$,}
\begin{equation} 
\R_{p,1}(n) = O( n^{\frac{3}{4} + \epsilon}).
\end{equation}

The similarity of the shape and magnitude of the plot of 
the remainder term $\R_{3, 1}(n)$  to  that of $\R_{\infty}(n)$,
including the jump sizes, is  striking. The structure and location of
the jumps is explainable as an artifact the hyperbola method;
the jumps are at $n=m(m+1)$ with $m$ squarefree and the jump directions are $-\mu(m)$.
The hypothesis above concerns the
growth rate of the reminder term and not its appearance,
and one may  ask whether this growth rate might be related to the Riemann hypothesis.

Since  the plots of all three of the $\R_{3, j}(n)$ above empirically appear to be about  the same size,
we also propose  for consideration: \medskip

\noindent {\bf Hypothesis $\R_{p,2}$.} {\em For each fixed $\epsilon >0$ there holds, as $n \to \infty$,}
\begin{equation} 
\R_{p,2}(n) = O( n^{\frac{3}{4} + \epsilon}).
\end{equation}

We have no  theoretical evidence supporting  Hypothesis $\R_{p, 2}$, but we have
checked it empirically for other small primes, on limited data sets.
  We speculate that  Hypothesis $\R_{p, 2}$, if true,  
might encode arithmetic
data  specific to the prime $p$, directly relating the M\"{o}bius function and the base $p$
expansions of integers, not  necessarily related to the Riemann hypothesis.

Besides Hypothesis $\R_{p,1}$ and $\R_{p,2}$, 
one may formulate in parallel a third hypothesis.\medskip

\noindent {\bf Hypothesis $\R_{p,0}$,}
{\em For each fixed $\epsilon >0$ there holds, as $n \to \infty$,}
\begin{equation} 
\R_{p,0}(n) = O( n^{\frac{3}{4} + \epsilon}).
\end{equation}

The  additive identity 
\eqref{identity1} relating the $\R_{p,j}$  for  $0 \le j \le 2$ above shows that the truth of 
any two of these hypotheses would imply the truth of the third. We have no independent
theoretical evidence supporting  Hypothesis $R_{p, 0}$.


%
%
%

\section{Concluding Remarks: Arithmetic encodings of the  Riemann hypothesis}\label{sec7}

To summarize our experimental work in Section \ref{sec5} and \ref{sec6} , we have found:
\begin{enumerate}
\item[(1)] The remainder term $\R_{\infty}(n)$ plotted in Figure \ref{fig51-R-inf} is provably related to the Riemann hypothesis
(via Theorem \ref{th51})  and its plots reveal 
a striking internal structure of jumps worthy of further investigation.
\item[(2)]
 The plot  for $p=3$ of $\R_{3,1}(n)$ 
pictured in Figure \ref{fig53-R3sum2} exhibits a similar internal structure to $\R_{\infty}(n)$,  
which implies nearly perfect  correlation of  the statistic $\R_{3, 1}(n)$  with  $\R_{\infty}(n).$
Similar internal structure was found  in plots for $p=2$ and $p=5$ (not pictured).
\end{enumerate}
The observation (2) was  surprising,  
in that the quantities defining 
the statistic $\R_{p,1}(n)$ seemed  very  different from  those defining  $\R_{\infty}(n)$.
Subsequent investigation revealed that 
the main features in these plots, with their pattern of large jumps followed by 
slow variation, can be explained  as being an artifact of the ``hyperbola method" truncation.
 The jumps are located at points $n=m(m+1)$
where $m$ is squarefree,  and the sign of the jumps is related to  $\mu(m)$. This direct
connection of the error term with the M\"{o}bius function indicates that the zeta zeros influence at
least part of its behavior.
The  Riemann hypothesis may possibly be  encoded in the growth
rates of the remainder terms; this topic is left for further investigation. 
Our data are insufficient to give a reliable guess  on this  growth rate. 
The data obtained  is at least consistent  with the possibility 
that  the  Riemann hypothesis may be directly visible in the
growth rate of the remainder term statistics of $\ord_p(F_n)$ at a fixed finite prime $p$.
Larger scale computations are needed to confirm or disconfirm the possible $O(n^{\frac{3}{4} +\epsilon})$ behavior  of 
this remainder term.

\subsection*{Acknowledgments}
We  thank J. Arias de Reyna, R. C. Vaughan and the two reviewers for helpful comments on this paper.
The first author thanks Harm Derksen for  bringing up questions on Farey products, resulting in
 \cite{DL11a}, \cite{DL11b}. 
Work of H. Mehta on this project started as part of  an REU program at  the University of Michigan,  
with the first author as mentor. \bigskip \bigskip


%
%
%

\section*{Appendix A: Empirical Results for $p=3$} \label{secAA}

This Appendix presents plots and tables for 
$\ord_p(\F_n)$ for $p=3$, supplementing  the data for $p=2$
given in graphs and tables in Section \ref{sec43}.

\newpage
Figure A.1 plots the values of $\ord_4(\F_n)$ for $1 \le n < 1214$.
The cutoff value for this table is not a power of $3$, since $3^6=729$ and $3^7= 2187$.
It was chosen to be roughly $1/2 3^7$, the same size as the cutoff value for powers of $2$ for the graph
in Section \ref{sec43}.

\begin{figure}[!htb]
\includegraphics[width=130mm] {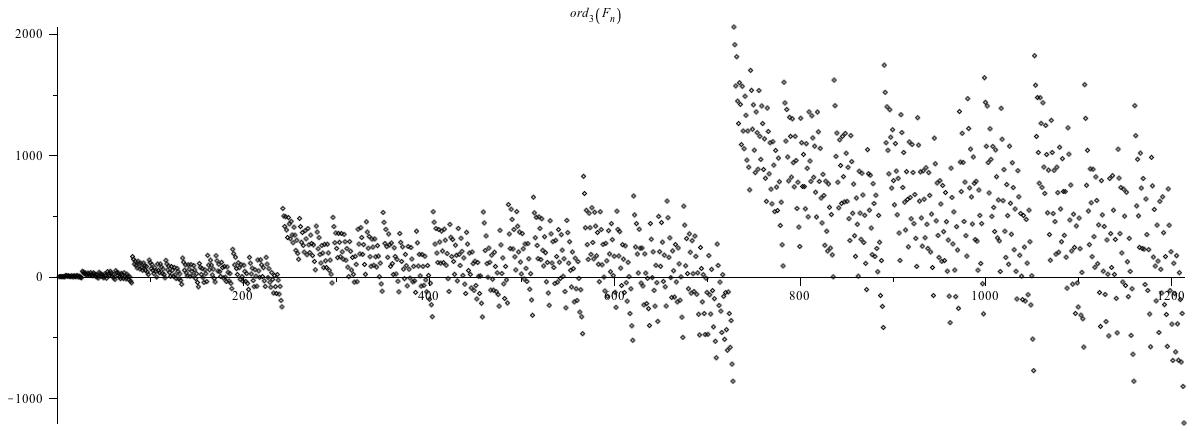}
\caption*{Figure A.1. $\ord_3(\F_n)$, $1 \le n \le 1214$.}
\label{figA1-ord3}
\end{figure}

 Table A.1  presents  data on the jump for $\ord_p(\F_{p^k-1})$ to
$\ord_p(\F_{p^k})$ for the prime $p=3$.  This data may be compared with
Table 4.1 for $p=2$.\\

%
%

\begin{minipage}{\linewidth}
\begin{center}
\begin{tabular}{|r | r | r | c |c|}
\hline
 \mbox{Power $r$} & $N= 3^r -1$ & $\ord_3(\F_{3^r -1})$  &  $-\frac{1}{N} \ord_3(F_{3^r -1})$  & $-\frac{1}{N \log_3 N} \ord_3(F_{3^r -1})$ \\
\hline
 $1$ &   $2$& $0$ &  $0.0000$  &  $0.0000$\\
$2$ &   $8$ & $-1$ & $ 0.1250$  &  $0.0538$\\
$3$ &  $26$ & $-9$  &   $0.3461$  & $0.1167$\\ 
$4$ & $80$ &  $-50$  & $0.6250$   &   $0.1567$\\ 
$5$ &  $242$ & $-248$  & $1.0248$  &  $ 0.2051$\\ 
$6$ & $728$ &   $-860$    & $1.1813$  & $0.1969 $\\
$7$ & $2186$ & $-3333$ & $1.5247$  & $0.2178$\\
$8$ &  $6560$ & $-12380$ & $1.8872$  & $0.2359$\\ 
$9$ &  $19682$ & $-45773$ & $2.3256$  & $ 0.2584$\\ 
$10$ & $59048$ & $-148338$ & $2.5122$ & $0.2512$ \\ \hline
\end{tabular} \par
\bigskip
\hskip 0.5in {\rm TABLE A.1.}  
{\em Values at $N= 3^r -1$ of $\ord_3(\F_N)$.}
\newline
\newline
\end{center}
\end{minipage}

%
%
%


\begin{thebibliography}{9}





\bibitem{Del75}
H. Delange,
\emph{Sur la fonction sommatoire de la fonction $\ll$Somme des chiffres $\gg$.}
L'Enseign. Math. {\bf 21} (1975), no. 1, 31--47.

\bibitem{DL11a}
H. Derksen and J. C. Lagarias, 
\emph{Problem 11594. An Integral Product.}
Amer. Math. Monthly {\bf 118} (2011), no. 8, 747.
[Solution {\bf 120} (2013), 856--857.]

\bibitem{DL11b}
H. Derksen and J. C. Lagarias, 
\emph{Problem 11601. The product of Farey series.}
Amer. Math. Monthly {\bf 118} (2011), no. 9, 846.
[Solution {\bf 120} (2013), 857--858.]

\bibitem{Dia82}
H. Diamond,
\emph{Elementary methods in the study of the distribution of prime numbers,}
Bull. Amer. Math. Soc., N. S. {\bf 7} 1982), no. 3, 553--589.



\bibitem{Fra24}
J. Franel,
\emph{Les suites de Farey et les probl\`{e}mes des nombres premiers.}
Nachr. Ges. Wiss. G\"{o}ttingen. Math-Phys,. Kl. 1924  (1924), 198--201.


\bibitem{Gra97}
A. Granville,
\emph{Arithmetic properties of binomial coefficients. I.  Binomial coefficients modulo prime powers.}
Organic mathematics (Burnaby, BC, 1995), 253--276, CMS Conf. Proc. 20,
Amer. Math. Soc. : Providence, RI  1997.

 

 \bibitem{HW79}
G. H. Hardy, and E. M.  Wright, 
\emph{An Introduction to the Theory of Numbers (Fifth Edition).}
Oxford University Press: Oxford 1979. 

\bibitem{Hux72}
M. N.  Huxley, 
\emph{The Distribution of Prime Numbers. Large sieves and zero-density theorems.}
Oxford Univ. Press: Oxford 1972. 

\bibitem{KW10a}
J. Kaczorowski and K. Wiertelak,
\emph{Smoothing arithmetic error terms: the case of the Euler $\varphi$-function,}
Math. Nachr. {\bf 283} (2010), no. 11, 1637--1645.

\bibitem{KW10b}
J. Kaczorowski and K. Wiertelak,
\emph{Oscillations of the remainder term associated to the Euler totient function,}
J. Number Theory {\bf 130} (2010), 2683--2700.


\bibitem{KY96}
S. Kanemitsu and  M. Yoshimoto,
\emph{Farey series and the Riemann hypothesis.}
Acta Arithmetica {\bf 75} (1996), No. 4, 351--374.

\bibitem{KY00}
S. Kanemitsu and  M. Yoshimoto,
\emph{Euler products, Farey series and the Riemann hypothesis.}
Publ. Math. Debrecen {\bf 56} (2000), no. 3-4, 431--449.


\bibitem{Lag13}
J. C. Lagarias,
\emph{Euler's constant: Euler's work and modern developments.}
Bull. Amer. Math. Soc. (N. S.) {\bf  50} (2013), no. 4, 527--628.

\bibitem{LM14u}
J. C. Lagarias and H. Mehta,
\emph{Products of binomial coefficients and unreduced Farey fractions,}
International J. Number Theory,  12:1 (2016), 57?91 \\
DOI:10.1142/S1793042116500044
{\em eprint: arXiv:1409.4145}




\bibitem{Lan24}
E. Landau,
\emph{Bemerkungen zu der vorshehenden Abhandlung von Herrn Franel,}
Nachr. Ges. Wiss. G\"{o}ttingen, Math-Phys,. Kl. 1924 (1924), 202--206.

\bibitem{Landau27}
E. Landau,
\emph{Vorlesungen \"{u}ber Zahlentheorie. Teil II.} 
Teubner: Leipzig 1927. (Reprint: Chelsea).

\bibitem{Mer1874}
F. Mertens,
Ueber einige asymptotische Gesetze der Zahlentheorie, 
J. reine angew. Math. {\bf 77} (1874), 289--338.

\bibitem{Mik49}
M. Mikol\'{a}s,
\emph{Farey series and their connection with the prime number problem I.}
Acta Sci. Math. (Szeged) {\bf 13} (1949), 93--117.
\bibitem{Mik51}
M. Mikol\'{a}s,
\emph{Farey series and their connection with the prime number problem II.}
Acta Sci. Math. (Szeged) {\bf 14} (1951), 5--21.

\bibitem{Mon87}
H. L. Montgomery,
\emph{Fluctuations in the mean of Euler's  phi function.}
Proc. Indian Acad. Sci. (Math. Sci.) {\bf 97} (1987), no. 1--3, 239--245.





\bibitem{Ten95}
G. Tenenbaum,
\emph{Introduction to Analytic and Probabilistic Number Theory,}
Cambridge Univ. Press:  Cambridge 1995.

\bibitem{Ten97}
G. Tenenbaum,
\emph{Sur la non-d\'{e}rivabilit\'{e} de fonctions p\'{e}riodiques associai\'{e}es \`{a}
certaines formules sommatoires.}
In: {\em The Mathematics of Paul Erd\H{o}s, I.} Berlin: Springer-Verlag 1997, pp. 117--128.

\bibitem{TH86}
E. C. Titchmarsh,
\emph{The Theory of the Riemann Zeta Function,}
Second Edition. Revised by D. R. Heath-Brown.
Oxford U. Press: Oxford 1986.

\bibitem{Wal62}
A. Walfisz,
\emph{Weylsche exponentialsummen in der neuren Zahlentheorie.}
VEB Deutscher Verlag Wiss. No. 16. Altenberg, GDR 1962.


\end{thebibliography}
\end{document}